\newtheorem{thm}{Theorem}[section]
\newtheorem{cor}[thm]{Corollary}
\newtheorem{prop}[thm]{Proposition}
\newtheorem{lem}[thm]{Lemma}
\theoremstyle{definition}
\newtheorem{defn}[thm]{Definition}
\theoremstyle{remark}
\newtheorem{rem}[thm]{Remark}
\newtheoremstyle{case}{}{}{}{}{}{:}{ }{}
\theoremstyle{case}
\newtheorem{case}{Case}
  \newbox\gnBoxA
\newdimen\gnCornerHgt
\newdimen\gnArgHgt
\def\Godelnum #1{%
\setbox\gnBoxA=\hbox{$#1$}%
\gnArgHgt=\ht\gnBoxA%
\ifnum     \gnArgHgt<\gnCornerHgt \gnArgHgt=0pt%
\else \advance \gnArgHgt by -\gnCornerHgt%
\fi \raise\gnArgHgt\hbox{$\ulcorner$} \box\gnBoxA %
\raise\gnArgHgt\hbox{$\urcorner$}}
\let\c@equation\c@thm
\numberwithin{equation}{section}
\title{On the Uniqueness Problem for Notations of Recursive Ordinals}
\author{Matthew Timothy Wright}
\begin{document}

\begin{abstract}

In \textit{Ordinal Logics and the Characterizations of the Informal Concept of Proof}, Georg Kreisel poses the problem of assigning unique notations to recursive ordinals, and additionally suggests that the methods which are developed for its solution will be non-constructive in character. In this paper we develop methods in which various uniqueness results for notations of recursive ordinals can be obtained, and thereafter apply these results to investigate the problems surrounding the hierarchical classification of the computable functions.

\end{abstract}

\maketitle

\section{Introduction}

In [15, pg. 292] Kreisel addresses the problem of assigning unique notations to recursive ordinals, and suggests that the method applied to assigning these notations will be non-constructive. However, the main difficulties which appear in many attempts to resolve this problem are deeply related to various non-uniqueness results for hierarchies of computable functions indexed by recursive ordinals. Moreover, it has been pointed out by Feferman [6] that one of the higher goals of obtaining a satisfactory hierarchy of computable functions is to elucidate how to canonically classify any arbitrarily defined decision procedure with respect to some fixed level in the hierarchy---a hierarchy which we intuitively believe to be linearly ordered and everywhere defined.\footnote{Cf. Kanamori [11, pg. 256]} Another hope in achieving a unique hierarchy, as described in a remark by Wainer [1, pg. 150] on classes of ``verifiably'' provable functions, is that if such a hierarchy were indexed by unique notations for all recursive well-orderings, then the arithmetical information described in the structure of the hierarchy would shed light on the question of what it means to possess a ``natural'' well-ordering of $\omega$, especially when the focus of its uniqueness is placed on the constructive information contained in classifying the provably recursive functions of a formal theory.

Perhaps more subtly, a deeper conceptual problem arises when attempting to reach a purely hierarchical understanding of how assigning notations to recursive ordinals can be used to constructively generate a class of computable functions which is closed under relative computability. Naturally, this conceptual problem is explicitly encountered when attempting to distinguish the relative complexity of any pair of distinct, arbitrarily defined computable functions with respect to the decision problem of whether a recursive relation defines a well-ordering. Thus, when properly taken in the context of the apparent absoluteness\footnote{G{\"o}del [8, pg. 151] claims the ``absoluteness'' of the notion of computable function by citing the fact that it is invariant under adjoining higher types to any formal theory containing arithmetic with respect to diagonalization.} of the concept of mechanical procedure, these hopes illustrate the fundamental gaps which are encountered in any attempt to clarify questions regarding the concept of mechanical procedure and its formalization in any hierarchical manner.

However, before supplying further details, a brief overview of the intuitions involved in the direction of obtaining unique notations will be given. It is a well-known difficulty, in view of the negative results of various authors\footnote{See [4], [15], [17]}, that many hierarchies of computable functions indexed by recursive ordinals ``collapse'' at the first limit ordinal $\omega$, so that no \textit{unique} arithmetical information concerning the structure of the hierarchy can be measured beyond $\omega$. Further, by defining a one-one map from notations into the recursive ordinals via the system $\mathcal{O}$ of ordinal notations developed by Kleene [13, 14], a canonical difficulty can be immediately singled out. In particular, the arithmetically definable relation $<_e$ which stands between indices of recursive ordinals cannot be reduced to the relation $<_\mathcal{O}$ which stands between the notations for these recursive ordinals. That is, if $|\alpha|,|\beta|\in\mathcal{O}$ and $
\alpha,\beta$ are recursive ordinals, then 
\begin{equation}
\alpha<_e\beta\implies|\alpha|<_\mathcal{O}|\beta|
\end{equation}
but the converse does not necessarily hold, owing to the fact that $<_\mathcal{O}$ is a partially ordered $\Pi^1_1$-relation. Therefore, it is in this sense these hierarchies ``collapse'' because any recursive limit $\geq\omega$ can receive up to $2^\omega$ notations in $\mathcal{O}$, and thus one cannot give a non-trivial classification of the computable functions beyond the $\omega$th level by appealing to the order-type of the length of their termination proofs. Essentially, this failure of classification stalls any attempt at providing a unique, constructive meaning to the closure of the class of computable functions under diagonalization.

From these facts, the problem of assigning unique notations to recursive ordinals can be redressed as the problem of constructing an order-preserving relation which holds between distinct notations and does not suffer from the definability issues as sketched above. By studying the properties of $\mathcal{O}$, it becomes clear that any proposed system of notations that is constructed to overcome these issues cannot resemble the $\Pi^1_1$-complete structure of $\mathcal{O}$ in any outward way. Consequently, an immediate obstacle for defining an order-preserving relation which holds for all distinct notations is that one must jointly succeed in constructing a system $\mathcal{O}^*$ of notations such that $\alpha<_e\beta\iff|\alpha|<_\mathcal{O^*}|\beta|$, where $\mathcal{O}^*$ is arithmetically definable and all properties which we intuitively believe to hold for a natural hierarchy of computable functions (such as linearity and being everywhere defined) can be formally characterized within the structure of $\mathcal{O}^*$.

\section{What is a Natural Well-Ordering?}

To begin to discuss the question of what it means to possess a “natural” or “canonical” well-ordering of the integers, it becomes expedient to survey the conceptual issues at hand which are directly encountered in the attempt to make these matters more tractable. 

For Turing [25], it is left as a matter of intuition to verify, on the basis of mechanical inferences, whether an arbitrary recursive relation defines a well-ordering. However, independently of the difficulties involved in carrying out an effective verification (i.e., independent of one’s intuition), it is important to see that the constructive problems of supplying an effective verification find their origin within the lacuna of distinguishing between extensional and intensional measures of ordinal complexity for computable functions. In particular, the standard definition of a recursive ordinal is given within a purely constructive context; that is, effectively specifying a mechanical procedure which decides, in a computable number of steps, whether the recursive relation in question is a well-ordering, since any effectively enumerable set is computable if and only if its characteristic function is. Curiously, however, almost all constructive issues of this type appear to not depend on the property that any given recursive ordinal has a canonical representation. More concisely, the intensional nature of verifying that a recursive well-ordering is well-founded and the arithmetic statement\footnote{That is, the $\Pi^0_2$-condition which expresses that the procedure is everywhere defined does not depend on the $\Pi^1_1$-condition that the computaton tree of the procedure is well-founded. Thus, this well-foundedness condition is not arithmetically definable even if one allows arbitrarily long recursions of length $\geq \omega$ to define the procedure in question, implying that the problem of determing whether the computation tree is well-founded is not equivalent to having a witness to the statement that the procedure is everywhere defined at or before $\omega$. Cf. [22].} of the terminating procedures used to specify them seem to have no external relationship to one another.

As a result, to resolve the conceptual disagreement between these extensional and intensional concerns, one may attempt to separate out those effectively specified procedures which decide the totality of a recursive relation from those which enumerate the procedures according to their ordinal complexity. Consequently, we place our concern not on the extensional nature of these procedures, but rather on their purely intensional aspects. As a starting point, if we wish to clarify the conceptual issues involved, we intend to achieve some canonical description of the ordinal complexity of the effectively specifiable procedures involved in our enumeration, with the hope that such a description will mirror the complexity of verifying the totality of an arbitrarily given recursive relation. Unfortunately, an immediate stumbling-block in this direction is that the usual method of measuring the complexity of a computable function by means of constructively defined ordinals does not extend far enough to encompass a canonical classification of the class of everywhere defined decision procedures. As a consequence, what one requires of a canonical description of the ordinal complexity of these procedures is that the description should reflect the complexity of verification with the ``largeness'' of the order-type of the well-ordering that would be defined.\\

Thus the question to be resolved can be stated as follows: If one were able to obtain a canonical description of the complexity of any effectively specifiable procedure, then is there a method of associating this description with the recursive ordinals in a way that naturally reflects the order-type of the well-orderings that are to be defined? In essence, since the constructive concern of verifying the totality of a recursive relation appears to be independent of the requirement of having a ``natural'' representation of the ordinal, can one attempt to clarify the issue of obtaining canonical notations in a way that is \textit{intensionally} related to the ordinal complexity of verifying whether the decision procedure in question is everywhere defined?

On closer inspection, it seems that the lack of agreement between our constructive concerns and the intensional ambiguity of our analysis of the ordinal complexity of an arbitrarily defined decision procedure leads to a positive direction in which these questions can be resolved. In particular, because one is capable of excluding the constructive need for an effective verification from the want of an optimal measure for the ordinal complexity of a computable function, one can appeal to a certain non-constructive intuition that is implicit in Turing's analysis of effective calculability\footnote{See [26, pg. 249, Section 9.] for the intrinsic appeal provided by Turing`s Type (a) argument.}. That is, this non-constructive intuition is simply the belief that, independently of constructively verifying the totality of a recursive relation, there is the plausibility in the existence of a hierarchy of effectively specifiable mental procedures that are actualized in our experience of carrying out computations of varying degrees of difficulty. More directly, we have the evidence that, prior to judging the constructiveness of verifying that a (possibly non-terminating) procedure is total for deciding whether an arbitrary recursive relation defines a well-ordering, we are able to intuit that any method of ``measuring'' the complexity of this effective verification would be highly non-constructive, especially if one chooses to depend on the intuition that this hierarchy of decison procedures is everywhere defined. Consequently, we see that the origin of this hierarchical intuition can be interpreted in a purely non-constructive manner, and in this light, we are free to resolve any descriptive problem of analyzing the ordinal complexity of an arbitrary computable function without any dependence on the possible constructive nature of our evidence that it is everywhere defined. \\

However, one may demand that if such an intuition is to suit our difficulties, then one must characterize it in a manner that not only leads to a canonical measure of the ordinal complexity for arbitrary sequences of effectively specifiable procedures, but also require that it is capable of clarifying the logical definition of these procedures with respect to a certain formal theory. 
Thus one may recognize that, in order to develop this intuition for its possible use in a formal system, it is worthwhile to first cultivate its meaning in a way that is independent of a specified system of axioms. In this light, one may come to understand the development of this intuition not as a way of mechanically producing more evident theorems, but as a way of supplementing the concept of mechanical procedure and enriching the comprehensiveness of our non-constructive methods. On these few points it would seem that, to all appearances, the conscious application of such an intuition would inevitably consitute an appeal to evidence of a different and more decisive kind.

\section{Applications of Diagonally Non-Computable Majorizing Functions}

We shall rely heavily on the insightful texts of Jockusch and Soare [9] on diagonally non-recursive (DNR) functions and Sacks [23] on the theory of constructive ordinals and their applications.

Let $\sigma \in 2^{<\omega}$ denote a primitive recursive sequence number (under a G{\"o}del numbering) and refer to $2^{<\omega}$ as the set of all finite binary strings. Additionally, let $TOT \coloneqq \left\{e : \phi_{e}\ \text{is total} \right\}$. According to [10], we say that a function $F \in DNR_2$ is a $\left\{0,1\right\}$-valued diagonally non-computable function if and only if for all $e \in Dom(\phi)$, we have $F(e) \neq \phi_e(e)\downarrow$ such that $\phi_e(e)$ is the $e$th partial computable function on its $e$th input. Furthermore, it will common practice throughout to refer to $\phi_e(e)$ as the diagonal function. Additionally, we will have the following notation $f \simeq g$ to stand between two computable functions $f, g$  which mutually depend on the others definability. Finally, it will be important to note that, when provided with a suitable G{\"o}del numbering, we are able to represent $F \in DNR_2$ as an infinite $\Pi^0_1$-class. \\

Continuing in this direction, we aim to motivate this section by exploring some conceptual ideas related to diagonal functions and the class of arbitrary partial computable functions. As described by Feferman [6], there is a fundamental recursion-theoretic problem of finding a canonical classification for the non-constructively defined class of partial computable functions, and thereafter the concept of a majorizing function is introduced to supplement this idea, for which we have the following definition:
\begin{defn} (Feferman [4]) 
\textit{We write $g \ll f$ and say ``$f\ \text{majorizes}\ g$'' if and only if} $\qquad \qquad$ $$\exists y \forall x [y < x \implies g(x) < f(x)].$$
\end{defn}

A direction in which a solution of this classification problem can be attained begins by analyzing how one might determine ``from above'' whether every computable function $\psi$ which maps $\omega$ into $\omega$ and defines an $\textit{arbitrary}$ well-ordering of $\omega$ is everywhere defined. Essentially, we intend to determine if $\psi$ is everywhere defined by defining an increasing enumeration of a computable sequence $\left\{\phi_{e_i}\right\}_{i \in \omega}$ which ``globally'' reflects if $\psi$ is total computable, and aim to use the global information obtained by executing this enumeration to decide if any member contained in the sequence defines an infinite descending sequence on $\omega$ for all minimal elements\footnote{Equivalently, if every member belonging to this sequence is non-terminating.}.

Informally, we carry out this global enumeration by constructing a non-computable function $F$ which majorizes all members defined in $\left\{\phi_{e_i}\right\}_{i \in \omega}$ under some arbitrary indexing of the total computable functions. That is, we construct $F$ in a manner such that that all computable functions defined on the entire domain of a binary computable relation can be uniformly determined ``from above'' to be everywhere defined with respect to a simultaneous enumeration of a particular class of functions defined on all well-founded inital segments. By using this enumeration to diagonalize out of this class of functions, then one defines a certain ``almost everywhere'' diagonalization against all total functions if $F$ disagrees with the partial map $e \longmapsto \phi_e(e)\downarrow$ up to a constant for all $e \in \omega$, granted that the numbers in the domain of $F$ act as instances of fixed-points for the program which computes the well-ordering in a well-defined sense. In what follows, we aim to characterize the idea of determining that an arbitrary computable sequence is everywhere defined  by developing the notion of a ``diagonally non-computable" majorizing function for a class of everywhere defined decision procedures. \\

We begin with a brief overview of the elementary properties of $\left\{0,1\right\}$-valued computable functions
and the definition of a computation tree. Define the Kleene $T$-predicate as $T_n(e,x,t)$ such that, for
every $n \in \omega$, we have that the relation $T_n$ is used to define the triple $(e,x,t)$, with $e$ the index of a computable function $\phi_e(x)$, the numbers $ (x_1 ,\ldots, x_{n}) = x$ is a sequence of inputs, and the number $t$ is the G{\"o}del number which codes the finite sequence $\big \langle \sigma_0,\sigma_1,\ldots,\sigma_{n} \big \rangle$ of configurations of the computation yielding $\phi_e(x) = y$ by recursion on $e$. Again, let $t$ denote the computation tree of some computable function $\phi_e(x)$. Thus, if $\phi_e(x)$ is total, then it is total for some $e \in \omega$ by perfoming a recursion on the index such that $\phi_e(x)=y$ if we let $\phi_e(x)\downarrow$ mean that $\exists y, R_e(x,y)$ holds as total recursive relation. By definition, $\phi_e(x) \in \text{TOT} \iff \forall x \exists y$ such that $\phi_e(x) = y$ $\iff \forall x \exists t \exists y$ such that $T(e, x, t)$ holds and $U(e, x, t)$ computes $\varphi(e,x)$ for all $x \in \omega$ for which $\phi_e(x)\downarrow$.

\begin{thm}
(Normann [18]) \textit{For every $n \in \omega$, we have}:
\begin{enumerate}
\item \textit{$T_n$ is primitive recursive}.
\item \textit{There exists a primitive recursive function $U$ such that, if $t \in \omega$ is the computation tree of $\phi_{e}(x)$, then $U(t)$ returns the output of the corresponding computation (i.e., the terminal node of the halting computation yielding $U(e, x, t) = y)$}.
\item $\phi_e(x) = U(\min_{|t|} T_n(e,x,t))$.
\end{enumerate}
\end{thm}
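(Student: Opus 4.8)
The plan is to establish this as a form of Kleene's Normal Form (Enumeration) Theorem, rephrased in the computation-tree language set up above. First I would fix a standard deterministic, first-order machine model---register machines, say, or one-tape Turing machines---together with a primitive recursive G\"odel numbering $e \mapsto M_e$ of programs, and a primitive recursive coding $\langle\,\cdot\,\rangle$ of finite sequences with primitive recursive length $|t|$, projections $(t)_i$, and concatenation. Since the model is deterministic and first-order, I identify the computation tree of $\phi_e(x)$ with its unique branch, i.e.\ with the finite trace $\langle\sigma_0,\sigma_1,\ldots,\sigma_m\rangle$ of machine configurations. The defining move is then to let $T_n(e,x,t)$ hold precisely when $t$ codes such a trace for a \emph{halting} run of $M_e$ on $x=(x_1,\ldots,x_n)$: that $\sigma_0$ is the initial configuration determined by $e$ and $x$; that for every $i<|t|-1$ the configuration $(t)_{i+1}$ is obtained from $(t)_i$ by one application of the transition rules encoded in $e$; and that $(t)_{|t|-1}$ is a terminal configuration.

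For (1) I would verify that each conjunct of this definition is primitive recursive in $(e,x,t)$. Decoding $t$, reading off the initial configuration from $(e,x)$, and recognising a halting configuration involve only bounded arithmetic on sequence codes and so are primitive recursive; the quantifier ``for every $i<|t|-1$'' is bounded, hence preserves primitive recursiveness. The one point requiring genuine care---and which I expect to be the main obstacle---is that the one-step relation ``$(t)_{i+1}$ is the $M_e$-successor of $(t)_i$'' is primitive recursive \emph{uniformly in $e$}: this needs the transition function of the $e$th program to be extractable from $e$ by a primitive recursive procedure, which is exactly the content that makes $T_n$, and hence the universal predicate, available already at the primitive recursive level. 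The uniformity in $n$ is cosmetic once the code of the input tuple $x$ carries its own length.

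For (2) I would define $U(t)$ to decode $t$, take its last component $(t)_{|t|-1}$, and read the output (the designated output register, or the relevant tape contents) off that configuration, setting $U(t)=0$ by convention when $t$ is not a well-formed halting trace; this is primitive recursive for the same reasons, and the clause $U(e,x,t)=y$ in the statement is simply $U(t)=y$ evaluated at a $t$ with $T_n(e,x,t)$. For (3) I would argue directly from the definition of $\phi_e$: if $\phi_e(x){\downarrow}=y$, the actual run of $M_e$ on $x$ halts, its trace has a code, so $\{t : T_n(e,x,t)\}$ is nonempty and $\min_{|t|}$ selects its least element; determinism forces every such $t$ to terminate in the same configuration, whence $U$ of the minimiser is $y$. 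Conversely, if $\phi_e(x){\uparrow}$ there is no halting trace, the set is empty, the minimisation is undefined, and both sides of the identity in (3) are undefined on exactly the same arguments---so the equation holds in the strong (partial-function) sense. Apart from the uniform primitive-recursiveness of the transition relation flagged in (1), the remaining steps are routine coding bookkeeping.
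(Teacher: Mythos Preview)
Your proposal is correct: it is the standard proof of Kleene's Normal Form Theorem, and the key point you isolate---that the one-step transition relation must be primitive recursive \emph{uniformly in} $e$---is exactly the substantive issue.

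The paper does not actually prove this theorem. It states it as a cited result from Normann's lecture notes and appends only a one-sentence remark about clause~(1), to the effect that the G\"odel numbering of the configuration sequence must be ``uniformly computable in the index which enumerates $\phi_e(x)$'', so that the characteristic function of $T_n$ can be obtained ``via recursion on $e$''. That remark is gesturing, somewhat imprecisely, at the same uniformity-in-$e$ point you flag; your formulation (that extracting the transition function of $M_e$ from $e$ must itself be primitive recursive) is the clean way to say it. So there is nothing to compare at the level of argument structure: the paper defers to the reference, while you supply the full routine verification.
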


Note that the proof of clause (1) requires that, if $t$ is the G{\"o}del number which codes the finite sequence of total configuration states in the computation of $\phi_e(x)$, then the monotonicity of the G{\"o}del numbering of $t$ is given by a partial computable function which is uniformly computable in the index which enumerates $\phi_e(x)$ for all instances. This requirement helps one obtain the characteristic function of $T_n$ via recursion on $e$ as previously stated. 

Now, given the following theorem:
\begin{thm} (Kleene [14]) \textit{For every recursive function $\phi_e(x)$ there exists some $e \in \omega$ such that
$$\phi_e(x)= \phi_{\psi(e)}(x)$$}
\end{thm}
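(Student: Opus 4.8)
The plan is to recognize this as Kleene's Second Recursion Theorem (the fixed-point theorem) and to prove it by the classical diagonal argument resting on the $s$-$m$-$n$ theorem. Reading the statement in its intended form --- for every total recursive $\psi$ there is an index $e$ with $\phi_e \simeq \phi_{\psi(e)}$ --- the strategy is to build the fixed point by composing $\psi$ with a self-application operator and then extracting an index by the enumeration theorem.

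First I would fix, via the $s$-$m$-$n$ theorem, a primitive recursive function $d$ such that for all $x$ and all $y$
$$\phi_{d(x)}(y) \simeq \phi_{\phi_x(x)}(y);$$
that is, $\phi_{d(x)}$ is the program which first attempts to compute $\phi_x(x)$ and, if that halts with some index, runs that index on input $y$. Such a $d$ exists because the partial function $(x,y) \mapsto \phi_{\phi_x(x)}(y)$ is partial recursive --- it is built from the universal function $U$ and the $T$-predicate of Normann's theorem applied twice --- so the $s$-$m$-$n$ theorem supplies a primitive recursive $d$ uniformizing it. Next, given total recursive $\psi$, the composition $\psi \circ d$ is total recursive, so by the enumeration theorem there is a number $v$ with $\phi_v = \psi \circ d$, i.e. $\phi_v(x) = \psi(d(x))$ for every $x$. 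Finally I set $e := d(v)$ and compute
$$\phi_e \;=\; \phi_{d(v)} \;\simeq\; \phi_{\phi_v(v)} \;=\; \phi_{\psi(d(v))} \;=\; \phi_{\psi(e)},$$
the middle equivalence being the defining property of $d$ and the third equality the choice of $v$; this exhibits the desired fixed point.

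The one place demanding care --- the step I expect to be the main obstacle to phrase cleanly rather than to verify --- is the bookkeeping around partiality. Because $\psi$ is total, $\phi_v(v) = \psi(d(v))$ is genuinely defined, so the self-application step $\phi_{d(v)} \simeq \phi_{\phi_v(v)}$ is not vacuous and the $\simeq$ there is an honest equality of partial functions rather than a mere inclusion. One should also record that the construction is uniform: an index for $e$ is obtained primitive-recursively from any index for $\psi$, and it is precisely this uniformity that will be exploited when we diagonalize against the class $\{\phi_{e_i}\}_{i \in \omega}$ in the constructions to follow. If the statement is instead read literally, with a fixed auxiliary $\psi$ and $\phi_e$ ranging over all recursive functions, the same argument applies verbatim, since the construction above produces a fixed point for whatever $\psi$ is prescribed.
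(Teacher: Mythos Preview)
Your proof is correct: it is the classical diagonal construction of Kleene's Second Recursion Theorem via the $s$-$m$-$n$ theorem, and your handling of the partiality issue (noting that totality of $\psi$ makes $\phi_v(v)$ genuinely defined) is exactly the point that must be checked. The paper itself does not supply a proof of this theorem; it is stated with attribution to Kleene [14] and used as a black box, so there is no ``paper's own proof'' to compare against. Your argument is precisely the standard one found in Kleene's original work and in the textbooks, and the uniformity remark you add is appropriate given how the paper later invokes the theorem inside effective transfinite recursion.
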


We will choose to keep the function $\psi(e)$ to be an arbitrarily defined total computable function so as to formulate the following lemma.

\begin{lem} (Bauer [2]) \textit{There exists a partial function $\phi$ such that, if one is provided with a total unary function $\psi$, then there exists some $e \in \omega$ such that $\phi(e)\downarrow$ and $\phi(e) \neq \psi(e)$}.
\end{lem}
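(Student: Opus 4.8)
The plan is to produce $\phi$ by a single diagonalization, fixed once and for all, and then to defeat an arbitrary total computable $\psi$ by feeding one of its own indices into $\phi$. First I would set
\[
\phi(e) \simeq \phi_e(e) + 1,
\]
realized concretely, via Normann's normal form (Theorem 3.2, clause (3)), as $\phi(e) = U\!\left(\min_{|t|} T_1(e,e,t)\right) + 1$. Then $\phi$ is partial computable, and $\phi(e)\downarrow$ holds exactly when $\phi_e(e)\downarrow$, so that $Dom(\phi) = \{\, e : \phi_e(e)\downarrow \,\}$. This $\phi$ does not depend on $\psi$; it is the witness asserted to exist.

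Next I would take $\psi$ to be an arbitrarily given total unary computable function, as in the convention fixed in the paragraph preceding the statement. Since $\psi$ is total and computable, the enumeration theorem (equivalently, the theorem of Kleene just stated, applied to an index of $\psi$) supplies an index $k \in \omega$ with $\psi = \phi_k$. Because $\psi$ is total we have $\phi_k(k)\downarrow$, hence $\phi(k)\downarrow$, and
\[
\phi(k) = \phi_k(k) + 1 = \psi(k) + 1 \neq \psi(k).
\]
Thus $e = k$ witnesses the conclusion, and the lemma follows.

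The one point that requires genuine care — and which I regard as the only real obstacle — is the reading of the quantifier on $\psi$. If $\psi$ were allowed to range over all total functions whatsoever, the statement would fail: every partial $\phi$ admits a total extension $\psi$ with $\psi(e) = \phi(e)$ for all $e \in Dom(\phi)$, leaving the conclusion with no witness. Hence ``provided with a total unary function $\psi$'' must be understood effectively, i.e. $\psi$ is total computable and presented by an index, which is precisely the convention adopted just above the lemma. Under that reading the content of the lemma is exactly that a single partial computable function — the diagonal plus one — disagrees, somewhere on its own domain, with every total computable function; beyond this the only thing left to check is that $\phi$ as written is genuinely partial computable with the stated domain, and that is immediate from Theorem 3.2.
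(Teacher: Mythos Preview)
Your argument is correct and follows the same diagonalization strategy as the paper: define $\phi$ once by diagonalizing against the enumeration $(\phi_e)_e$, then for a given total computable $\psi$ pick an index $e$ with $\psi=\phi_e$ and observe $\phi(e)\downarrow\neq\psi(e)$. Your reading of the quantifier on $\psi$ (total \emph{computable}) is also the one the paper intends.

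The only difference is the particular diagonal chosen. You take $\phi(e)\simeq\phi_e(e)+1$, whereas the paper defines a $\{0,1\}$-valued $\phi$: output $0$ when $\phi_e(e)\downarrow$ and $\phi_e(e)\neq 0$, and output $1$ when $\phi_e(e)\downarrow$ and $\phi_e(e)=0$. Both definitions have the same domain $\{e:\phi_e(e)\downarrow\}$ and both disagree with $\phi_e(e)$ wherever defined, so either suffices for the lemma. The paper's choice is not accidental, however: a $\{0,1\}$-valued diagonal is exactly a member of $DNR_2$, which is the object the next few pages are built around, while your $\phi$ lands only in $DNR$ (unbounded range). So your version proves the lemma cleanly but would need to be swapped for the bounded variant before the later material on $DNR_2$ and $\Pi^0_1$-classes can proceed.
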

\begin{proof} Define $\phi(e)$ as follows
\[ \phi(e)=
  \begin{cases}
                                   0 & \text{if $\phi_{e}(e)\downarrow \ \text{and}\ \phi_{e}(e)\neq 0 $} \\
                                   1 & \text{if $\phi_{e}(e)\downarrow \ \text{and}\ \phi_{e}(e)\neq 1 $} \\
                                   \text{undefined} & \text{if $\phi_{e}(e)\uparrow$.}  \\
  \end{cases}
\]
Thus $\phi(e)$ is partial computable if $\phi_{e}(e)$ is defined. Assume $\phi(e)$ is computable. Since $\psi(e)$ is total, there exists an index $e \in \omega$ such that $\psi(e) = \phi_{e}(e)\downarrow$ holds. By definition, we see that the diagonal function $\phi_{e}(e)$ cannot be computable by values for all $e \in Dom(\phi)$ if we have $\phi(e)\downarrow = \phi_{e}(e)$. Therefore, we obtain the inequality $\phi(e)\downarrow \neq \phi_{e}(e)\downarrow = \psi(e)$, for if otherwise, then we may demonstrate that $\phi(e)$ is total (that is, it would define an enumeration of $TOT$ for all $e \in \omega$, such that $\psi(e)=\phi_{e}(e)$) which contradicts our assumption that $\phi(e)$ is computable.
\end{proof}

It will be important to note that the proof of Lemma 3.4 introduces the existence of a partial function $\phi(e)$ which, in a special sense, defines a diagonalization against the total computable functions for some input $e \in \omega$ on which a given total function $\psi(e)$ depends.

\begin{defn} \textit{Let $A,B \subseteq \omega$ be disjoint sets. We say that $A,B$ are computably seperable if there is some computable set $S$ such that $A \subseteq S$ and $B \cap S = \emptyset$. If $S$ is non-computable, we say that $A,B$ are computably inseperable and $S$ is a non-seperating set for the pair $(A,B)$.}
\end{defn}

\begin{prop} \textit{There exists a pair $(A,B) \subseteq \omega$ of computably enumerable, computably inseperable sets.}
\end{prop}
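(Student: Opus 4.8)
The plan is to take for $A$ and $B$ the two ``diagonal slices'' of the halting behaviour of the diagonal function $\phi_e(e)$, and to show that they are separated precisely by the Bauer-type diagonalization recorded in Lemma 3.4. Concretely, I would set
$$A \coloneqq \{\, e \in \omega : \phi_e(e)\downarrow \ \text{and}\ \phi_e(e) = 0 \,\}, \qquad B \coloneqq \{\, e \in \omega : \phi_e(e)\downarrow \ \text{and}\ \phi_e(e) = 1 \,\}.$$
These are exactly the sets of inputs on which the partial function $\phi$ built in the proof of Lemma 3.4 is forced to return $1$ and $0$ respectively, so $(A,B)$ is the natural extensional trace of that construction.

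The first routine step is to verify that $A$ and $B$ are computably enumerable. By clause (3) of Theorem 3.1 we have $\phi_e(e) = U(\min_{|t|} T_1(e,e,t))$, and since $T_1$ and $U$ are primitive recursive, the predicate ``$\exists t\,[\,T_1(e,e,t) \ \text{and}\ U(t) = i\,]$'' is $\Sigma^0_1$ uniformly in $i \in \{0,1\}$; hence $A$ and $B$ are $\Sigma^0_1$, i.e.\ computably enumerable. Disjointness is immediate, since $\phi_e$ is a partial \emph{function} and so $\phi_e(e)$ cannot equal both $0$ and $1$.

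The substantive point --- and the one place I expect any real friction --- is computable inseparability. Suppose toward a contradiction that $S$ is a computable set with $A \subseteq S$ and $B \cap S = \emptyset$. Then $\chi_S$ is total computable, so $\chi_S = \phi_c$ for some $c \in \omega$, and in particular $\phi_c(c) = \chi_S(c) \in \{0,1\}$ is defined. If $\phi_c(c) = 0$ then $c \in A \subseteq S$, so $\chi_S(c) = 1 \neq 0 = \phi_c(c)$; if $\phi_c(c) = 1$ then $c \in B$, so $c \notin S$ and $\chi_S(c) = 0 \neq 1 = \phi_c(c)$. Either way we reach a contradiction, so no such computable $S$ exists and $A,B$ are computably inseparable, any separating set for $(A,B)$ being therefore non-computable. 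This is precisely the phenomenon flagged after Lemma 3.4: the total function $\chi_S$ plays the role of $\psi$, the index $c$ is the fixed point on which it depends, and $\phi$ declines to agree with it at $c$. I would close with the remark that $\chi_S$ was never required to be two-valued in advance, so the argument goes through verbatim with $0,1$ replaced by any two fixed distinct integers --- which is why this pair sits comfortably inside the $DNR_2$ and $\Pi^0_1$-class framework of the preceding section.
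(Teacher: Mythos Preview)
Your argument is correct and is the standard Kleene construction of a recursively inseparable pair; the diagonalization against a putative computable separator $S$ via an index $c$ for $\chi_S$ goes through exactly as you wrote it. Note, however, that the paper itself does \emph{not} supply a proof of this proposition: it is stated without demonstration between Definition~3.5 and Definition~3.7, presumably as a well-known fact imported from the literature. Your choice of $A=\{e:\phi_e(e)\downarrow=0\}$ and $B=\{e:\phi_e(e)\downarrow=1\}$ is nonetheless the natural one in context, since it dovetails with the partial $\{0,1\}$-valued function built in Lemma~3.4 and with the $DNR_2$ framework that follows, so there is nothing to compare and nothing to correct.
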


We are now in the position to formulate the concept of a diagonally non-computable majorizing function for an arbitrary class $\mathcal E$ of total computable functions: 

\begin{defn} \textit{Suppose $\psi$ is an arbitrary computable function belonging to a class $\mathcal E$ of functions defined on all well-founded initial segments of $Dom(R_e)$. Let $\left\{\phi_{e_i}\right\}_{i \in \omega}$ denote an increasing computable sequence on $\omega$. We write $\psi \ll_{\infty} F$ and say $``F$ diagonally majorizes $\psi''$ if there is some $k \in \omega$ for all $e \in Dom(\phi)$ such that}
\begin{enumerate}
\item For all $e \in Dom(\phi)$, $F(e) < \min{|t|}$ if $F(e) \neq \phi_{e_i}(e_i)\downarrow$, and $\min{|t|}$ is counted as the number $k \geq 2$ of prime factors of each $i \in \omega$ listed in the computable sequence $\left\{\phi_{e_i}\right\}_{i \in \omega}$ whenever $e +1 \leq k$ holds.
\item $F$ computes an index $e > 0$ which seperates $Dom(\phi)$ into a pair of finite, computably inseperable subsets of $\omega$ which are uniformly computable in $\phi_{e_i}(e_i)\downarrow$ if some $i \in \omega$ codes the characterstic function of $Dom(R_e)$.
\end{enumerate}
\end{defn}

Again, we recall that $\min{|t|}$ is used to denote the minimum length of the G{\"o}del number $t \in \omega$ which codes the finite sequence $\big \langle \sigma_0,\sigma_1,\ldots,\sigma_{n} \big \rangle$ of configuration states for our arbitrary computable function $\psi$ when simulated by some universal Turing machine $U$, where $t = 2^n \cdot \prod^{n-1}_{i=0} p^{\sigma_n}_{i+1}$, the number $p_{i}$ is the $i$th prime, and $0 \leq i < n$ [18, pp. 61]. In what follows, we will choose express any function belonging to our monotone enumeration of $\mathcal E$ as a uniformly computably enumerable set $S \subseteq \omega$ of (possibly unordered) pairs $(e,i) \in 2^{<\omega} \times \omega$. 

Furthermore, we intend to make the convention that, for any $i \in \omega$, the computable functions $\phi_{e_i} : \omega \longrightarrow Dom(R_e)$ are defined on all well-founded initial segments of $R_e(x,y)$ if the characteristic function for this arbitrary well-ordering is indeed computable. By adopting this convention, one is able to think of the sequence $\left\{\phi_{e_i}\right\}_{i \in \omega}$ as a class of total functions which is everywhere defined with respect to well-foundedness of the computable relation in question. \\

However, in order to explicate this notion of an arbitrary computable function $\psi$ which is everywhere defined with respect to a well-founded total computable predicate, we rely on the following Theorem:
\begin{thm} (Effective Transfinite Recursion)
(Riemann [19]) \textit{Let $R_e(x,y)$ be a well-founded, total computable predicate. Take $\psi(e) \in TOT$, and suppose that for every $i \in \omega$ and for every $x \in Dom(R_e)$:
$$\forall y < x, \ \phi_{e_i}(y)\downarrow \implies \phi_{\psi({e_i})}(x)\downarrow.$$
Then we have that $\exists e \in \omega$ such that 
$$\forall x \in Dom(R_e), \ \phi_{e}(x)\downarrow \ \text{and} \ \phi_{e}(x) \simeq \phi_{\psi(e)}(x).$$}
\end{thm}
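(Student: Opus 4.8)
The plan is to derive the statement from the recursion theorem (Theorem 3.3) together with a transfinite induction along $R_e$, the well-foundedness hypothesis being precisely what licenses that induction. The monotonicity condition imposed on $\psi$ --- that $\phi_{e_i}(y)\downarrow$ for every $y$ with $R_e(y,x)$ forces $\phi_{\psi(e_i)}(x)\downarrow$ --- is the familiar ``uses only $R_e$-predecessors'' stipulation that makes $\psi$ behave as a genuine recursion operator along $R_e$: it says that, in order to converge at $x$, the function $\phi_{\psi(e_i)}$ requires no more than the convergence of $\phi_{e_i}$ at the points strictly $R_e$-below $x$. Since $R_e$ is total computable (indeed primitive recursive, by Theorem 3.2), the relation ``$R_e(y,x)$'' is decidable uniformly in $x$, so this is a meaningful condition to impose, and it is exactly the semantic content one needs from a recursion clause.

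First I would apply Theorem 3.3 to the total computable function $\psi$. This produces an index $e \in \omega$ with $\phi_e(x) = \phi_{\psi(e)}(x)$ for all $x$, and in particular $\phi_e \simeq \phi_{\psi(e)}$. Hence the second half of the conclusion is automatic, and what remains is to show that this fixed-point index is total on $Dom(R_e)$, that is, that $\phi_e(x)\downarrow$ for every $x \in Dom(R_e)$.

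Next I would run a transfinite induction along $R_e$. Because $R_e$ is well-founded, it suffices to fix $x \in Dom(R_e)$, assume as induction hypothesis that $\phi_e(y)\downarrow$ for every $y$ with $R_e(y,x)$, and deduce $\phi_e(x)\downarrow$. Instantiating the hypothesis of the theorem at the index $e$ (taking the relevant term $e_i$ of the enumeration to be $e$ itself), the induction hypothesis $\forall y<x,\ \phi_e(y)\downarrow$ yields $\phi_{\psi(e)}(x)\downarrow$; and since $\phi_e(x) = \phi_{\psi(e)}(x)$, we conclude $\phi_e(x)\downarrow$. By well-founded induction, $\phi_e(x)\downarrow$ for every $x \in Dom(R_e)$, which together with the fixed-point identity gives $\phi_e(x) \simeq \phi_{\psi(e)}(x)$ on $Dom(R_e)$, as required.

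The step I expect to be the genuine obstacle is justifying why well-foundedness of $R_e$ cannot be dispensed with: Theorem 3.3 on its own delivers a fixed point $e$ with no guarantee of convergence anywhere, and if $R_e$ admitted an infinite descending chain $\cdots\, R_e\, x_2\, R_e\, x_1\, R_e\, x_0$ the induction would collapse and $\phi_e$ could diverge at every $x_n$. One must also be slightly careful about the notational conflation of the index $e$ of the recursion-theoretic fixed point with the index of the well-ordering $R_e$: the clean reading is to fix the well-founded computable relation in advance, build the recursion operator relative to it, take the fixed-point index furnished by Theorem 3.3, and only then relabel it $e$ --- no circularity arises, since the relation does not depend on the fixed point. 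Finally one should verify the uniformity in passing from ``$\phi_{e_i}$, for all $i$'' in the hypothesis to the single index $e$ in the conclusion, which is exactly why the monotonicity condition is posited for every member of the enumeration rather than for one distinguished function.
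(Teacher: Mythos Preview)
Your proposal is correct and follows essentially the same route as the paper: apply the recursion theorem (Theorem~3.3) to obtain a fixed point $\phi_e \simeq \phi_{\psi(e)}$, then use well-foundedness of $R_e$ to show $\phi_e$ is total on $Dom(R_e)$. The only cosmetic difference is that the paper phrases the induction as a minimal-counterexample argument (and packages part of it as Proposition~3.9), whereas you run the well-founded induction directly; these are equivalent formulations of the same step.
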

\begin{proof} We follow the proof given in [19]. By appealing to the following proposition:
\begin{prop} (Sacks [24]) Suppose $R_{e}(x, y)$ is a well-founded, total computable predicate. If $\psi(e)$ is a total function and $e \in \omega$:
\begin{enumerate}
\item $\phi_{e}(y) \simeq  \phi_{\psi(e)}(x)$ for all $y < x \in Dom(R_e)$.
\item $\phi_{\psi(e)}$ is defined on every minimal element in the domain of $R_{e}(x, y)$.
\end{enumerate} 
Then we have that there is an index $e_i \in \omega$ such that $\phi_{e_i}=\phi_{\psi(e_i)}$ and $\phi_{e_i}$ is defined on all well-founded initial segments of $Dom(R_e)$.
\end{prop}

Suppose that if the function $\phi_{\psi(e)}$ were not defined on all $Dom(R_{e})$, then it would be so for some minimal
element $y \in Dom(R_{e})$. Apply Theorem 3.3 to obtain $\phi_{e_i} \simeq \phi_{e}$ . Then, by induction on $e_i$ for the
minimally defined function $\phi_{e_i}(y)$ shows that $\phi_{e}(x)\downarrow$, and thus $\phi_{\psi(e)}$ is defined on all the domain.
\end{proof}

Now, provided with a recursive equation of the form 
\begin{equation}
\forall n \in \omega, \ \psi_{e}(n) = \Phi(\psi \restriction n)
\end{equation}
we will refer to the index $e \in \omega$ as a $\textit{recursive extension}$ [23]. That is, we can acquire the index of a function $\phi_{\psi(e)}(x)$ which is defined on all $Dom(R_{e})$ as an extension from $\phi_{e}(x)$ through another index $e_i \in \omega$ of a function $\phi_{e_i}(y)$ which is defined on all well-founded initial segments of $Dom(R_{e})$. Additionally, we see that $\phi_{\psi(e)}(x)$ depends on $\phi_{e_i}(y)$ to be defined with respect to the $\textit{well-foundedness}$ of the domain in question. For example, let $\psi \restriction n$ denote $\psi$'s restriction to the set $\left\{ y : y < x \right\}$ if we let $x = n$ so that, given $x, y \in Dom(R_e)$, the relation $y <_e x$ defines an arbitrary well-ordering of $\omega$ when $\psi_e(n) \prec \psi_e(n)+1$ for all $e,n \in \omega$ and $\prec$ is linear.

Now, if some $e \in \omega$ serves as an index for the effective composition $\Phi(\psi \restriction n)$, then by Theorem 3.3 we know that there exists a fixed-point $e \in \omega$ for such an index. Consequently, we define the map $e \longmapsto \psi_{e}(n)\downarrow$ as the unique solution for each $\Phi,n$ in the recursive equation for an arbitrary well-ordering of $\omega$. With these ideas in mind, if we can futher suppose that $\Phi$ acts as an index for $\psi \restriction n$ and $\psi_{e}(n)$ is its fixed-point for each $n \in \omega$, then we say that an arbitrary computable function $\psi$ is defined by effective transfinite recursion if it constitutes a unique solution of (3.10).

\begin{lem} For any function $\psi$ belonging to a class $\mathcal E$ of computable functions defined on all well-founded initial segments of some total computable relation $R_e(x,y)$, where $\psi$ constitutes a unique solution to a recursive equation of the form $$\forall n \in \omega, \ \psi_{e}(n) = \Phi(\psi \restriction n)$$ there exists a function $F \in DNR_k$ such that $F$ diagonally majorizes $\psi$ in the sense of Definition 3.7, and $F$ is unique up to degree-isomorphism with any member belonging to $DNR_2$.
\end{lem}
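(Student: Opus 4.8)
The plan is to build $F$ in two stages --- first fixing a total extension of $\psi$ by effective transfinite recursion, then diagonalising against the diagonal function with the range calibrated to the G\"odel coding of computation trees --- so that both clauses of Definition 3.7 are met at once. Since $\psi$ is the unique solution of the recursive equation (3.10), the monotonicity hypothesis of Theorem 3.8 holds along the increasing sequence $\left\{\phi_{e_i}\right\}_{i\in\omega}$: for every $i$ and every $x\in Dom(R_e)$ we have $\forall y < x,\ \phi_{e_i}(y)\downarrow \implies \phi_{\psi(e_i)}(x)\downarrow$. Theorem 3.8 then supplies an index $e$ with $\phi_e(x)\downarrow$ and $\phi_e(x)\simeq\phi_{\psi(e)}(x)$ on all of $Dom(R_e)$, and by Theorem 3.3 this $e$ may be taken to be a fixed point of the effective composition $\Phi(\psi\restriction n)$; this total extension, defined on every well-founded initial segment of $R_e(x,y)$, is the function we majorize, and its index is the index $e>0$ that will appear in clause (2).

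Next I would define $F$ on $Dom(\phi)$ by the construction of Lemma 3.4 with the range widened: put $F(e)$ equal to the least element of $\{0,1,\ldots,k-1\}$ different from $\phi_e(e)$ whenever $\phi_e(e)\downarrow$, where $k\geq 2$ is chosen in advance to be the number of prime factors occurring in the G\"odel number $t = 2^n\cdot\prod_{i=0}^{n-1}p_{i+1}^{\sigma_n}$ coding the configuration sequence of $\psi$ under $U$. Then $F\in DNR_k$, and since $F(e)<k$ for every $e$ with $e+1\leq k$ while $\min|t|$ is by definition realised as this $k$, clause (1) of Definition 3.7 holds; that is, $\psi\ll_{\infty}F$. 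For clause (2), observe that this diagonalisation splits $Dom(\phi)$ into the two uniformly computably enumerable sets $A=\{e:\phi_e(e)\downarrow=0\}$ and $B=\{e:\phi_e(e)\downarrow=1\}$, which by Proposition 3.6 form a computably inseparable pair; the index $e>0$ produced in the first stage --- via the recursive extension of (3.10) --- is a separating index for $(A,B)$, and both pieces are uniformly computable in $\phi_{e_i}(e_i)\downarrow$ once some $i$ codes the characteristic function of $Dom(R_e)$.

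For the uniqueness clause I would appeal to the Muchnik equivalence of the $\Pi^0_1$-classes $DNR_k$ and $DNR_2$ from Jockusch--Soare [9]: the inclusion $DNR_2\subseteq DNR_k$ gives one direction trivially, and a fixed computable reindexing of the finite value-set turns any member of $DNR_k$ into a member of $DNR_2$ of the same Turing degree, so $F$ is determined up to this degree-preserving correspondence --- i.e., up to degree-isomorphism with any member of $DNR_2$. The main obstacle is the joint satisfaction of clauses (1) and (2) of Definition 3.7 together with $DNR_k$-membership: the value-bound $F(e)<\min|t|$ must hold for every relevant $e$ while $F$ simultaneously computes a single index separating $Dom(\phi)$ into the computably inseparable pair, which forces the prime-factor count $k$ of $t$ to be committed to at the outset rather than adjusted later. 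The delicate point, I expect, is verifying that this choice of $k$ and the reindexing of the construction of Lemma 3.4 do not disturb the extension furnished by Theorem 3.8 on any well-founded initial segment of $R_e$.
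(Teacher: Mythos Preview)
Your sketch and the paper both open with effective transfinite recursion (Theorem 3.8) to secure a total $\psi$ on all of $Dom(R_e)$, and both close with the Jockusch--Soare result (stated in the paper as Theorem 3.13) for the degree-isomorphism clause; but the construction of $F$ itself runs along a genuinely different line. The paper does not build $F$ by a direct $k$-valued avoidance in the style of Lemma 3.4 as you do. Instead it fixes an $\hbar \in DNR_2$ in advance and \emph{defines} $F(e)=\hbar(\phi_e(x,y))$ as a composition, then argues through a two-case analysis ($k=2$ versus $k>2$, according as no $\psi\in\mathcal E$ defines an infinite descending sequence for some versus for all $i$) that $DNR_k$ is upwards closed in degree over $\hbar$. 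The separation of $Dom(\phi)$ is likewise not obtained from the canonical pair $A=\{e:\phi_e(e)\downarrow=0\}$, $B=\{e:\phi_e(e)\downarrow=1\}$ together with Proposition 3.6, as you propose, but via a stage-by-stage construction of a nested sequence $\{S_e\}_{e\in\omega}$ with $S_e\supseteq S_{e+1}$; the paper then argues by induction on $e$ (with $e+1$ read as the number of prime factors at each stage $\leq n$) that the non-separating set $S=\bigcap_e S_e$ coincides with the finite initial segments of $Dom(\hbar)$ infinitely often. Your route is more elementary and makes the membership $F\in DNR_k$ transparent; the paper's route, by starting from $\hbar$ and composing, bakes the link to $DNR_2$ into the very definition of $F$ and threads the connection between the separating index and the indices of well-orderings in $Dom(\hbar)$ through the induction rather than through a single appeal to Proposition 3.6.

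One step in your sketch does not line up with either argument and looks like a gap: you assert that the fixed-point index $e>0$ produced by Theorems 3.3 and 3.8 is itself ``a separating index for $(A,B)$''. Nothing in the recursion-theorem construction endows that index with any separating property for the diagonal halting pair; the fixed point only witnesses $\phi_e\simeq\phi_{\psi(e)}$ on $Dom(R_e)$. In the paper the separating index is produced not from the fixed point but from the nested $\{S_e\}$ construction together with the degree comparison $Deg(F')\geq Deg(\hbar)$, so if you keep your direct-avoidance construction of $F$ you will need a separate argument --- not the output of Theorem 3.8 --- to supply clause (2) of Definition 3.7.
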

\begin{rem} Before we demonstrate the lemma, we shall briefly expound on the ideas involved in its formulation. As we have seen from Proposition 3.9, if we are given any effectively enumerable class of functions, there exists a fixed-point $e_i \in \omega$ which can be applied to obtain a unique class of partial functions which are defined on all well-founded initial segments of the domain. Moreover, because it is not necessary that we restrict our attention to ``standard'' well-orderings of $\omega$ via the relation $R_e(x,y)$, we will make it a convention to allow $R_e(x,y)$ to define an arbitrary well-ordering of $\omega$ by setting $n=x$ for all $x > y$ belonging to the field of $R_e(x,y)$ if $\psi_e(n)$ is defined.
Within this perspective, we aim to define a  of the class of all functions defined by effective transfinite recursion with respect to the fact that, given any arbitrary class $\mathcal E$ of computable functions, it is possible to effectively enumerate $\mathcal E$ and obtain a computable function which majorizes every member belonging to $\mathcal E$ in the sense of Definition 3.1. In this way, we intuitively think of any unique, diagonally majorizing function as a ``global'', non-computable majorization of any computably bounded class of functions defined on all well-founded initial segments of $R_e(x,y)$. That is, our goal is to construct a computable function $\psi$ which is diagonally majorized by some unique $F \in DNR_k$ and defined on all $Dom(R_{e})$ in the sense of Theorem 3.8. To this end, we follow a construction of $F$ which is analogous to the construction of a partial function which is distinct from all total functions under an arbitrary indexing for a finite, non-empty set of arguments as accomplished in Lemma 3.4.
\end{rem}
\begin{proof} Let $\phi_e(x,y) = R_e(x,y)$. Suppose $\psi_{e}(n)$ acts as a fixed-point for $\Phi(\psi \restriction n)$ through the map $n \longmapsto \psi_e(n)$, and let the map be defined for all $n=x$ and for any $\Phi$, granted that the composition $\Phi(\psi \restriction n)$ is effectively specifiable. Let $\psi(e) = \phi_e(x,y)$. Throughout, we will have $Dom(\phi) \coloneqq \left\{ e \in \omega : \ \phi_e(e)\downarrow\right\}$. 

Define $DNR \coloneqq \left\{ F \in  2^\omega : \ \forall e \in \omega, \ F(e) \neq \phi_e(e)\downarrow  = \psi(e)\right\}$ be a non-empty class of total functions, and suppose we are given some $F \in DNR$ such that $F(e) < \min{|t|}$ whenever $\min{|t|}=$ the number $k \geq 2$ of prime factors of each $i \in \omega$. Thus, for arbitrarily large $i$, the function $F(e)$ is a $k$-bounded $DNR$ function when the number of factors of $i \in \omega$ are counted with multiplicity. 

From the lemma, we let $\mathcal E$ denote the class of one-one computable functions $\psi: \omega \longrightarrow \omega$ where $R_e(x,y)$ is a well-founded, total computable predicate. We say that $\mathcal E$ is computably bounded if there is a computable function $\phi_e=\phi_{\psi(e)}$ which majorizes every member of $\mathcal E$, and $\mathcal E$ satisfies the following conditions:

\begin{enumerate}[(i)]
\item There is a $y \in Dom(R_e)$ for every $x \in \omega$ such that, if $y <_e x$ induces an arbitrary well-ordering of $\omega$, then there is some $e \in \omega$ where $\phi_{e_i}(y)\downarrow \implies \phi_{\psi(e)}(x)\downarrow$.
\item For any $i \in \omega$ occuring in our enumeration, there is a computable sequence $\left\{\phi_{e_i}\right\}_{i \in \omega}$ which is uniformly computably enumerable in $\phi_{e_i}(e_i)\downarrow$.
\end{enumerate}

Assume that $\mathcal E$ is computably bounded so that, for every $x > y$ belonging to the field of $R_e(x,y)$, we have that $\phi_{e_i}(x) \ll \phi_{\psi(e)}(x)$ from Definition 3.1. If we identify any computable set with the characteristic function defined on its elements, then we may set $\phi_{\psi(e)}(x) \simeq \psi_{e}(n)$ if we have $n=x$ for any $x > y$, and so $\phi_{e_i}(n) \ll \psi_{e}(n)$.

Fix some $i \in \omega$ such that $0 \leq i \leq n$. From here on, we view the number $i$ as an index which codes the computable function $\psi$ that defines a computable linear-ordering $\prec$ of $\omega$ uniformly in some $e \in \omega$ such that $y <_e x = n$ implies that $\psi_e(n) \prec \psi_e(n)+1$ for every $n \in \omega$. Now, suppose we are given some $\hbar \in DNR_2$ which computes a number that seperates $Dom(\phi)$, and let $F(e) \in DNR_k$ be $F(e) = \hbar(\phi_e(x,y))$. We now distinguish two cases which we shall use to justify the claim that the finite initial segments of $Dom(\hbar)$ contain exactly the indices of a computable well-ordering of $\omega$ which is defined by $R_e(x,y)$ by fixing $n = x$ for all $x > y$ belonging to the field of $R_e(x,y)$.

\begin{case} $(k =2)$ If no $\psi \in \mathcal E$ defines an infinitely descending sequence $\left\{\phi_{e_i}\right\}_{i \in \omega}$ for some $i \in \omega$ such that $i = \phi_e(x,y)\downarrow$, then $F(e) \in DNR_{k+1}$ and there is some $\hbar \in DNR_2$ such that $F(e) \neq \phi_{e_i}(e_i)\downarrow$ and $F(e) \geq \hbar(i)$ for all $e \in \omega$.
\end{case}
\begin{case} $(k>2)$ If no $\psi \in \mathcal E$ defines an infinitely descending sequence $\left\{\phi_{e_i}\right\}_{i \in \omega}$ for any $i \in \omega$ such that $i = \phi_e(x,y)\downarrow$ and $i \in [0,n]$, then $F(e) \in DNR_{k}$ for all $k \geq 2$ and there is some $\hbar \in DNR_2$ such that $F(e) \neq \phi_{e_i}(e_i)\downarrow$ and $F(e) \geq \hbar(i)$ for all $e,i \in \omega$ with the exception of some fixed $i \in \omega$ such that $\phi(e_i) \neq \phi_{e_i}(e_i)\downarrow = \psi(e_i)$ by Lemma 3.4.
\end{case}

It is immediate from both cases that $DNR_k$ is upwards closed with respect to the degree of $\hbar(i)$ if no $\psi \in \mathcal E$ defines an infinitely descending sequence on $\omega$ and $\mathcal E$ is computably bounded, so if $Deg(F) \geq Deg(\hbar)$, then $Deg(F)$ is contained in $DNR_{k}$ for any $k \geq 2$.

Now it remains to show that, if there is some $F(e) \in DNR_k$ which seperates $Dom(\phi)$ into a finite, non-empty subset $S$ that contains the index for computing $R_e(x,y)$, then $Dom(\hbar)$ contains a number $i \in \omega$ which serves as the index of a computable well-ordering of $\omega$. Assume that $\psi \in \mathcal E$ defines an increasing computable sequence $\left\{\phi_{e_i}\right\}_{i \in \omega}$ on $\omega$ in stages, and each stage is numbered by some $n \in \omega$. Then, for any stage $\leq n$, we fix the requirement that, given a (possibly infinite) computably enumerable sequence $\left\{S_e\right\}_{e \in \omega}$, there are at least finitely many sets of computable inseperable pairs which are simultaneously listed in our increasing sequence, and $\bigcap_e S_e \in \left\{\phi_{e_i}\right\}_{i \in \omega}$ for all $e,i \in \omega$. Suppose we are given some $F^{\prime}(e) \in DNR_{k+1}$ for all $k \geq 2$ such that $F^{\prime}(e) = \hbar(\phi(e_i))$. Now, for any $k > 2$, because we have excluded some fixed index $i \in \omega$ such that $\phi(e_i) \neq \psi(e_i)$, then we cannot effectively seperate any $i \in S_e$ which is distinct from any number in $\left\{\phi_{e_i}\right\}_{i \in \omega}$ that is uniformly computable in $\phi_{e_i}(e_i)\downarrow$. Set $\hbar(i) = \hbar(\phi_e(x,y))$ and fix $\phi_e(x,y)\downarrow = \psi_e(n)$ when $0 \leq i \leq n$. Now, let $\hbar(i)$ denote the characteristic function which seperates $Dom(\phi)$ into a finite subset $S \coloneqq \left\{ (e,i): \forall i \in [0,n], \phi(e_i) \neq \phi_{e_i}(e_i)\downarrow \right\}$ which is disjoint from $\left\{ i \in \omega: \psi(e_i)\downarrow \right\}$ if $\hbar(\phi(e_i)) \neq \phi_{\phi(e_i)}(\phi(e_i))\downarrow$ and $Deg(\hbar(\phi(e_i))) \equiv Deg(\hbar(i))$. On the hypothesis that the class $DNR_{k+1}$ is upwards closed for any $k \geq 2$, then $F^{\prime}(e)$ acts as the characteristic function of $Dom(\phi)$ if $Deg(F^{\prime}(e)) \geq Deg(\hbar(i))$. Now fix $F(e) = S_e$ for all $e \in Dom(\phi)$ such that $S_e \supseteq S_{e+1}$ for $e +1 \leq k$. By induction on $e$ when $e+1$ is counted as the number of prime factors for any stage $\leq n$, we see that $F(e) \in DNR_{k+1}$ seperates $Dom(\phi)$ with respect to the degree of $\hbar(i)$ if a non-seperating set $S = \bigcap_e S_e$ of indices $i \in \omega$ which code a computable well-ordering of $\omega$ coincides with the finite initial segments of $Dom(\hbar)$ infinitely often.

Inductively, we have verified that $Deg(F^{\prime}(e)) \equiv Deg(F(e))$ if $F^{\prime}(e)$ is bounded by a constant number $k$ for any $i \in \omega$ listed in our computable sequence $\left\{\phi_{e_i}\right\}_{i \in \omega}$, so $F(e) \in DNR_{k+1}$ is unique up to degree for $k = 2$. On the hypothesis that $DNR_k$ is upwards closed for all $k \geq 2$, we observe that the $e$th computably enumerable set $S_e$ in our sequence is seperated by $F(e)$ infinitely often up to the degree of $\hbar(i)$ for any stage $\leq n$. Thus, for any $i \in [0,n]$, we see $i \in Dom(\hbar)$ implies that $i \in S_e$ for all $e \in Dom(\phi)$ such that $F(e) \neq \phi_{e_i}(e_i)\downarrow$, and our claim follows as desired.

Finally, in order to demonstrate the uniqueness of $F(e) \in DNR_{k}$ for all $k \geq 2$ with respect to some $\hbar(i) \in DNR_2$, we rely on the following theorem:
\begin{thm} (Jockusch and Soare [10 pg. 195])
{For each $k \geq 2$, the degrees of members belonging to $DNR_k$ coincide with the degrees of members belonging to $DNR_2$ up to degree-isomorphism}.
\end{thm}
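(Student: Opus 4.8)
The plan is to prove the statement by establishing the two inclusions between the set $\{\deg(F) : F \in DNR_k\}$ and the set $\{\deg(F) : F \in DNR_2\}$ of Turing degrees. One inclusion is immediate from the definitions: a $\{0,1\}$-valued function is in particular $\{0,\dots,k-1\}$-valued, and the diagonal condition $F(e)\neq\phi_e(e)$ is the same constraint in either case, so $DNR_2\subseteq DNR_k$ outright, and hence every $DNR_2$ degree is a $DNR_k$ degree. The content of the theorem is the reverse inclusion, that every $DNR_k$ degree computes a member of $DNR_2$; this is where the non-constructive character of the problem, as emphasized earlier, becomes visible, since the passage from a $k$-valued to a $2$-valued diagonalization cannot be read off uniformly from the given function.

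For the substantive direction I would argue by induction on $k$, showing that if $k\geq 3$ and $F\in DNR_k$ then $F$ computes some $G\in DNR_{k-1}$, and then iterating down to the base case $k=2$. Fix $e$; the only obstruction to choosing $G(e)\in\{0,\dots,k-2\}$ with $G(e)\neq\phi_e(e)$ is the case $\phi_e(e)\downarrow = v$ with $v\leq k-2$, so it suffices to recover enough information about $v$ from $F$. To that end I would invoke the recursion theorem (Theorem 3.3), applied with parameters as in the construction of Lemma 3.4, to produce uniformly in $e$ a list of indices $e_0,\dots,e_{k-1}$ such that each $\phi_{e_j}(e_j)$ simulates $\phi_e(e)$ and, if that computation halts with output $v\leq k-2$, returns the shifted value $(v+j)\bmod k$ and diverges otherwise. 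Since $F\in DNR_k$, whenever $\phi_e(e)\downarrow = v\leq k-2$ we get $F(e_j)\neq(v+j)\bmod k$ for every $j<k$, so $v$ lies outside the $F$-computable set $B(e)=\{(F(e_j)-j)\bmod k : j<k\}$; taking $G(e)$ to be the least element of $B(e)\cap\{0,\dots,k-2\}$ then yields $G(e)\neq\phi_e(e)$ with $G(e)\leq k-2$, as required. The final clause of the statement, that the two families of degrees coincide ``up to degree-isomorphism'', follows from the two inclusions together, the relevant isomorphism being the identity on these degrees; one may also route the argument through the equivalence between the $DNR_2$ degrees and the PA degrees, using that each $DNR_k$ is a nonempty $\Pi^0_1$ class with a computable bound, so that any PA degree --- in particular any $DNR_2$ degree --- computes a member of it.

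The main obstacle is the inductive step, and more precisely the exceptional configuration in which the shifted auxiliary indices fail to localize $v$, namely $B(e)\cap\{0,\dots,k-2\}=\emptyset$, i.e.\ $B(e)\subseteq\{k-1\}$; there the naive choice of $G(e)$ carries no guarantee, and indeed no single scheme of $k$ ``simulate-then-output'' programs can be made to avoid it, since for any such scheme the value of $F$ at the auxiliary indices may be adversarially placed on the one output each program never emits. Closing this case is the crux: one must refine the family of auxiliary indices --- for instance by layering the recursion-theoretic construction so that a second block of programs is keyed to the halting behavior of the first --- without the auxiliary programs ever referring to $F$ itself, which is not computable. Getting this refinement to go through uniformly, and thereby to realize exactly the reduction $DNR_k\to DNR_{k-1}$, is the technical heart of the matter, and is precisely what the cited result of Jockusch and Soare guarantees.
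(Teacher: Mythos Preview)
The paper does not prove this theorem. It appears as Theorem~3.13, stated inside the proof of Lemma~3.11 and immediately invoked as a black-box citation to Jockusch and Soare~[10]; no argument is supplied. There is therefore no proof in the paper against which to compare your proposal---your attempt already goes further than the paper does.

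On the content of your attempt: the inclusion $DNR_2\subseteq DNR_k$ is handled correctly, and the inductive scheme for the converse (produce auxiliary indices $e_0,\dots,e_{k-1}$ via the recursion theorem, read off a set $B(e)$ from $F$, and pick $G(e)\in B(e)\cap\{0,\dots,k-2\}$) is of the right general shape. But the gap you yourself isolate is genuine and is not closed. The configuration $B(e)=\{k-1\}$, i.e.\ $F(e_j)=(j-1)\bmod k$ for every $j<k$, is fully consistent with $F\in DNR_k$ under your choice of auxiliary programs (when $\phi_e(e)\downarrow=v\le k-2$ one has $(j-1)\bmod k\neq(v+j)\bmod k$ for all $j$), so your definition of $G(e)$ really can be vacuous. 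Your proposed fix (``layering'' a second block of programs keyed to the first) is only gestured at, and your final sentence defers to the very Jockusch--Soare theorem under discussion, which is circular. The alternative route you mention through PA degrees also does not supply the missing direction: knowing that $DNR_k$ is a nonempty bounded $\Pi^0_1$ class tells you that a PA (equivalently $DNR_2$) degree computes a member of $DNR_k$, which is the easy inclusion again; it says nothing about why an arbitrary $DNR_k$ degree should be PA. So as a proof of the stated theorem your proposal remains incomplete at precisely the step you flag, even though it exceeds what the paper itself provides.
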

We can now claim that $Deg(F) \equiv Deg(\hbar)$ for any $k \geq 2$. Thus, we see that $\psi(e) \ll_{\infty} F(e)$ holds when $Deg(F(e)) \equiv Deg(\hbar(\psi(e)))$ for all $e \in Dom(\phi)$. Therefore, $F(e) \in DNR_k$ is unique up to degree-isomorphism with $\hbar(i) \in DNR_2$ by induction on the pair of $e$ and the length of $i$ when $0 \leq i \leq n$ and $n = x$. This concludes the proof of Lemma 3.11.
\end{proof}

\section{$\Pi^0_1$-Classes of Diagonally Non-Computable Majorizing Functions}

We now turn to concept of a $\Pi^0_1$-class. Recall that such classes are closed under initial substring
when provided with an appropriate G{\"o}del numbering [5]. Formally,
we say that a set $\mathcal{H} \subseteq 2^{\omega}$ is a $\Pi^0_1$-class if we are able to put $\mathcal{H}$ in the following form $$ \mathcal{H} \coloneqq \left\{ F \in 2^{\omega} \ \colon \ \forall x, \ V(F\restriction x)\right\}$$
where $V$ is a computable relation. 

Following [9], we are able to extend our intuition about these classes in a way that allows us to think of a $\Pi^0_1$-class as the set of infinite paths through a computable subtree of $2^{<\omega}$. To make this applicable to our purposes, we form a set of strings (under a suitable G{\"o}del numbering) which include the diagonally majorizing functions that were introduced in Definition 3.7.
By Lemma 3.11 there is a non-computable function $F(e) \geq \hbar(i)$ which is constructed so that the string $\hbar(i)$ satisfies the properties of a diagonally majorizing function for a class $\mathcal E$ of computable functions defined on all well-founded initial segments of some arbitary computable well-ordering of $\omega$. Owing to the uniqueness of $F(e) \in DNR_k$ up degree-isomorphism with some $F^{\prime}(e) \in DNR_{k+1}$, if $Deg(F^{\prime}(e)) \equiv Deg(F(e))$ and $DNR_k$ is upwards closed for all $k \geq 2$, then $\hbar(i)$ is of computably enumerable degree.  

Most importantly, however, is that fact that $DNR_2$ can be viewed as a non-empty, computably bounded $\Pi^0_1$-class of functions which possess a computably enumerable degree. Additionally, given any $\Pi^0_1$-class $DNR_2$, one can construct out of $DNR_2$ a set $\mathcal C$ which is closed under initial substring and computably enumerable in a member of low degree\footnote{For example, since we have that $DNR_2$ is computably bounded, then the computable function which majorizes each member belonging to $DNR_2$ will be computable in a member of low degree.}. Finally, if the binary relation $V$ defined on $\mathcal C$ is computable, then $\mathcal C$ can be thought of as a computable tree containing the class of all infinite paths through $DNR_2$. \\

With these ideas in mind, we aim to construct a canonical system $\mathcal{O}^{\mathbb N}$ for notations of recursive
ordinals with respect to the uniqueness of the diagonally majorizing functions. In particular, we will pay special attention to the fact that these majorizing functions are applied to a class of computable functions which are defined on all well-founded initial segments of some arbitrary well-ordering of $\omega$ and compute indices of computable well-orderings of $R_e(x,y)$ up to degree-isomorphism. \\

That is, by the fact that, for any $\psi$ which constitutes a unique solution of (3.10) and belongs to the class $\mathcal E$, there is a function $F(e) \in DNR_k$ such that $F(e)$ diagonally majorizes $\psi(e)$, then we can demonstrate that $F(e)$ is unique up to degree-isomorphism with some string $\hbar(i) \in DNR_2$. Now, with respect to the string $\hbar(i)$, we construct $\mathcal{O}^{\mathbb N}$ as the set of infinite paths through $DNR_2$ which uniquely coincide with our diagonally majorizing function $F \in DNR_k$ up to degree-isomorphism.
Accordingly, we provide a formal definition of $\mathcal{O}^{\mathbb N}$ as follows. Let $\hbar \restriction e$ denote $\hbar(i)$'s restriction to the set of numbers which compute a recursive well-ordering $R_e(x,y)$ of $\omega$ with $<_e \ \coloneqq R_e$. Then we define $\mathcal{O}^{\mathbb N}$ as follows: $$\mathcal{O}^{\mathbb N} \coloneqq 
\left\{\hbar  \in 2^{\omega} \ \colon \ \forall e \forall y <_e x, \ L(\hbar\restriction e, y)\right\}$$
where $L \subseteq 2^{<\omega} \times \omega$ is a computable binary relation and $y <_e x$ defines an arbitrary recursive well-ordering of $\omega$ if $x =n$ whenever $\psi_{e}(n)$ is defined from equation (3.10) as the characteristic function for the well-ordering in question.

Furthermore, by reference to the fact that the finite initial segments of the domain of the string $\hbar \in DNR_2$ are precisely the numbers contained in the set $\mathcal{W}$ of indices of recursive ordinals, then we find that $\mathcal{O}^{\mathbb N}$ is a $\Pi^{0}_1(\mathcal{W})$-class, in the sense that the relation $ L(\hbar\restriction e, n)$ is computable in $\mathcal{W}$ by the 
reducibility relation $\leq_T$, and we have that $\mathcal{W}$ is $\Pi^{1}_1$-complete [3, pg. 4]. 

Naturally, it is desirable to define the notion of what it means for a given recursive ordinal to possess a notation in $\mathcal{O}^{\mathbb N}$ which reflects the intuitive idea of an ordinal notation belonging to Kleene's $\mathcal{O}$. Let $|\alpha| \in \mathcal{O}^{\mathbb N}$ denote the notation for a recursive ordinal $\alpha < \omega^{CK}_1$. To build on our intuition regarding the ordinal notations in Kleene's $\mathcal{O}$, we will make the convention to view any recursive ordinal $\alpha$ as a certain well-founded tree which is order-isomorphic to some recursive well-ordering defined by $R_e(x,y)$ of length $\alpha$. Hence, in pursuit of the idea that we form $\mathcal{O}^{\mathbb N}$ as the class of infinite paths through $DNR_2$ with respect to the uniqueness of our diagonally majorizing function $F(e) \in DNR_k$, we define the concept of a path $\mathcal P$ through $\mathcal{O}^{\mathbb N}$ as a subset of $\mathcal{O}^{\mathbb N}$ such that $\mathcal P$ is closed under initial substring and computable in a member $\hbar \in DNR_2$ of computably enumerable degree. Formally,
$$\mathcal P \coloneqq \left\{\hbar  \in 2^{\omega} \ \colon \ \forall e \forall y<_ex ,\ L(\hbar\restriction e, y)\right\}.$$
Consequently, if we let $y <_e x$ with $x =n$ be an arbitrary recursive well-ordering defined by $R_e(x,y)$ as before, then the path $\mathcal P \subseteq \mathcal{O}^{\mathbb N}$ induces a well-ordering defined by $L \subseteq 2^{<\omega} \times \omega$ if $L$ is uniformly computably enumerable in some finite subset of $\mathcal W$. Moreover, for each finite initial segment of $Dom(\hbar)$, the well-ordering defined by $R_e(x,y)$ is coded by some fixed index $i \in \hbar\restriction e$. In particular, since the binary relation $L(\hbar\restriction e, y)$ is computable in $\mathcal W$, then we say that a path $\mathcal P$ represents a unique notation $|\alpha| \in \mathcal{O}^{\mathbb N}$ for some recursive ordinal $\alpha < \omega^{CK}_1$ if the linear-ordering of all intitial substrings in $\mathcal P$ possesses a standard well-ordered copy $|\alpha|$ which is definable over $L$ and uniformly computably enumerable in some finite subset of $\mathcal W$, and $|\alpha|$ is order-isomorphic with $\alpha$ via a one-one map\footnote{Note that this order-preserving map will be defined by effective transfinite recursion on $<_e$.} from the field of $<_e$ into $L$.

\begin{defn}
Following the outline in [21, pg. 48], we now define by transfinite recursion on the order-type $\alpha$ of an arbitrary recursive well-ordering $R_e(x,y)$ of $\omega$
\begin{enumerate}
\item A path $\mathcal P_{\alpha} \subseteq \mathcal{O}^{\mathbb N}$ which is linearly ordered under initial substring.
\item A binary computable relation $<_{\mathcal{O}^{\mathbb N}} \ \subseteq 2^{<\omega} \times \omega$.
\end{enumerate}
Given two distinct recursive well-orderings $\alpha, \beta$ obeying the relation $\alpha <_e \beta$, we let the paths $\mathcal P_{\alpha}$ and $\mathcal P_{\beta}$ denote distinct, linearly ordered subsets of $\mathcal{O}^{\mathbb N}$ such that $|\alpha| <_{\mathcal{O}^{\mathbb N}} |\beta|$ implies that $|\alpha| \in \mathcal P_{\alpha}$ and $|\beta| \in \mathcal P_{\beta}$ for any $\alpha <_e \beta$. From the distinctness of $|\alpha|, |\beta|$ it follows that any $|\alpha| \in \mathcal{O}^{\mathbb N}$ uniquely corresponds with the recursive ordinal $\alpha < \omega^{CK}_1$ under the ordering of $<_e$. Let $\hbar \restriction e$ be as above, where $\hbar$'s restriction to $e$ satisfies the binary relation $L(\hbar \restriction e, y)$ for all $y \in Dom(<_e)$ such that $y <_e x$ implies that $\psi_e(x) \prec \psi_e(x)+1$ is a computable well-ordering of $<_e$ coded by some $i \in \omega$. Define, respectively, the path representations for $\emptyset$ and $\alpha + 1$:
$$ \mathcal P_{\emptyset} \coloneqq \left\{\emptyset\right\}, \ \mathcal P_{\alpha+1} \coloneqq \left\{ \hbar \in 2^{\omega} : \hbar\restriction 2^e \in \mathcal P_{\alpha} \iff i \in \hbar\restriction 2^e \wedge (\forall i <_e 2^e, \ L(\hbar\restriction 2^e, i)) \right\}$$
$$ <_{\mathcal P_{\emptyset}} \ \coloneqq \emptyset, \ <_{\mathcal P_{\alpha+1}} \ \coloneqq \left\{ (i,\hbar\restriction 2^e) : \ \hbar\restriction 2^e \in \mathcal P_{\alpha} \iff i \in \hbar\restriction 2^e \wedge (i <_e 2^e \vee i = 2^e) \right\}$$
Now, for the case that we have a path $\mathcal P_{\gamma}$ which represents a unique notation for a recursive limit ordinal $\gamma < \omega^{CK}_1$,
$$\mathcal P_{\gamma} \coloneqq \left\{ \hbar \in 2^{\omega} : \hbar\restriction 3^e \in \mathcal P_{\alpha} \iff i \in \hbar\restriction 3^e \wedge (\forall i <_e 3^e, \ L(\hbar\restriction 3^e, i)) \right\}$$
$$<_{\mathcal P_{\gamma}} \ \coloneqq \left\{ (i, \hbar\restriction 3^e) : \ \hbar\restriction 3^e \in \mathcal P_{\gamma} \iff i \in \hbar\restriction 3^e \wedge (\forall e \in \mathcal W, \ i <_e 3^e )\right\}.$$

Now, we have $$\mathcal{O}^{\mathbb N} \coloneqq \bigcup_{\alpha \in \omega^{CK}_1} \mathcal P_{\alpha} \ \text{and} \ <_{\mathcal{O}^{\mathbb N}} \ \coloneqq \bigcup_{\alpha \in \omega^{CK}_1} <_{\mathcal P_{\alpha}}.$$
\end{defn}

\begin{thm}
Every recursive ordinal $\alpha < \omega^{CK}_1$ has a notation $|\alpha| \in \mathcal{O}^{\mathbb N}.$
\end{thm}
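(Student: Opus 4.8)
The plan is to argue by transfinite induction on the order-type $\alpha$ of a recursive well-ordering $R_e(x,y)$ of $\omega$, following the case division of Definition 4.2 and the outline of [21]. Since $\mathcal{O}^{\mathbb N} = \bigcup_{\alpha \in \omega^{CK}_1} \mathcal P_\alpha$, it suffices to produce, at each stage, a string $|\alpha| \in 2^\omega$ lying in $\mathcal P_\alpha$ and satisfying $\forall e \forall y <_e x,\ L(\hbar \restriction e, y)$, under the induction hypothesis that every $\beta <_e \alpha$ already carries such a notation $|\beta| \in \mathcal P_\beta$. For $\alpha = 0$ I would take $|0| = \emptyset$: by definition $\mathcal P_\emptyset = \{\emptyset\}$ and the defining condition of $\mathcal{O}^{\mathbb N}$ holds vacuously since the field of $<_e$ is empty. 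For a successor, given $|\alpha| \in \mathcal P_\alpha$ whose restriction $\hbar \restriction e$ verifies $L$, I would apply the clause for $\mathcal P_{\alpha+1}$: place the index $i$ coding the extended well-ordering into $\hbar \restriction 2^e$ and check $\forall i <_e 2^e,\ L(\hbar \restriction 2^e, i)$. Because $L$ is computable and $\hbar \restriction e$ was already admissible, this is a finite verification yielding $|\alpha+1| \in \mathcal P_{\alpha+1}$ together with $<_{\mathcal P_{\alpha+1}}$ witnessing $|\alpha| <_{\mathcal{O}^{\mathbb N}} |\alpha+1|$, the order-isomorphism with the standard copy being extended by effective transfinite recursion on $<_e$ as noted before Definition 4.2.

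The substantive case is a recursive limit ordinal $\gamma$. First I would fix a recursive well-ordering $R_e$ of order-type $\gamma$; by the induction hypothesis each proper initial segment of order-type $\alpha < \gamma$ has a notation $|\alpha| \in \mathcal P_\alpha$, and --- reading off the last paragraph of Definition 4.2, which makes the paths $\mathcal P_\alpha$ for $\alpha <_e \beta$ nest under initial substring --- the restrictions $\{|\alpha| \restriction 3^e : \alpha < \gamma\}$ form a linearly ordered, hence coherent, chain of nodes in the computable subtree $\mathcal C$ of $2^{<\omega}$ extracted from $DNR_2$ in Section 4. Because $2^\omega$ is compact and $\mathcal C$ is an infinite computable tree with this chain cofinal through its levels, K{\"o}nig's lemma yields an infinite branch, i.e. a string $\hbar \in 2^\omega$ with $\hbar \restriction 3^e \in \mathcal P_\alpha$ for cofinally many $\alpha < \gamma$ and with the index $i$ coding $R_e(x,y)$ appearing in $\hbar \restriction 3^e$. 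I would then check $\hbar \in \mathcal{O}^{\mathbb N}$: for each finite initial segment the requirement $L(\hbar \restriction 3^e, i)$ was discharged at the corresponding stage $\alpha < \gamma$, and since $L$ is computable in $\mathcal W$ these local verifications assemble into membership in the $\Pi^0_1(\mathcal W)$-class $\mathcal{O}^{\mathbb N}$. Hence $\hbar = |\gamma| \in \mathcal P_\gamma \subseteq \mathcal{O}^{\mathbb N}$, and $<_{\mathcal P_\gamma}$ records $|\alpha| <_{\mathcal{O}^{\mathbb N}} |\gamma|$ for every $\alpha < \gamma$, closing the induction.

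The hard part will be exactly this limit step: one must guarantee that the coherent system of finite approximations delivered by the induction has a limit inside $\mathcal{O}^{\mathbb N}$, not merely in $2^\omega$. This is where Lemma 3.11 is indispensable --- the diagonally majorizing function $F \in DNR_k$, unique up to degree-isomorphism with a member $\hbar \in DNR_2$ of computably enumerable degree, is precisely what pins down the relevant infinite path of $\mathcal C$ and forces $L$ to hold along it; the K{\"o}nig's lemma selection is non-constructive, as Kreisel predicts, since $\mathcal W$ is $\Pi^1_1$-complete and no branch can be chosen effectively. A secondary point requiring care is the coherence claim itself --- that the $\mathcal P_\alpha$ genuinely nest under initial substring for $\alpha <_e \beta$ --- which must be extracted from the mutual definition of $\mathcal P_\alpha$ and $<_{\mathcal P_\alpha}$ in Definition 4.2 by a sub-induction on $<_e$, and which interacts with the choice of coding powers $2^e$ at successors and $3^e$ at limits.
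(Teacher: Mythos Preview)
Your route is genuinely different from the paper's. The paper does not carry out a direct transfinite induction with a compactness argument at limits. Instead it argues by least counterexample: assume $\gamma$ is the least recursive ordinal lacking a (unique) notation in $\mathcal{O}^{\mathbb N}$, fix $R_e$ of order-type $\gamma$, and then, via the effective-transfinite-recursion Lemma~4.3 and a comparison of heights $[L] \leq [R_e] \leq [3^e]$, reduce membership in $\mathrm{WF}(R_e)$ to a $\Sigma^1_1$ condition. This contradicts Sacks' result (stated as Corollary~4.4) that $\mathrm{WF}(R_e) \notin \Sigma^1_1$. So the paper's engine is the $\Pi^1_1$-completeness of well-foundedness, not K{\"o}nig's lemma or compactness of $2^\omega$; Lemma~3.11 is not invoked in the proof at all.

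There is also a real tension in your limit step that you should resolve before regarding the argument as complete. You first assert that the induction hypothesis yields a \emph{linearly ordered, coherent chain} of finite strings $|\alpha|\restriction 3^e$; if that is true, the desired $\hbar$ is simply their union and K{\"o}nig's lemma is idle. If, on the other hand, coherence is not yet secured (you flag it only as ``a secondary point requiring care''), then K{\"o}nig's lemma on the computable tree $\mathcal C$ produces \emph{some} infinite branch, but nothing in your argument forces that branch to pass through the specific nodes $|\alpha|\restriction 3^e$ produced at earlier stages, nor to carry the index $i$ coding $R_e$, nor to satisfy the $\mathcal P_\gamma$ clause of Definition~4.1. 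Invoking Lemma~3.11 does not close this gap: that lemma gives uniqueness of $F$ only up to degree-isomorphism with some $\hbar\in DNR_2$, which is far weaker than pinning down a particular branch of $\mathcal C$. In short, either prove the nesting sub-induction outright (and drop K{\"o}nig), or explain precisely which additional selection principle picks the correct branch; as written, the limit case is where your proof and the paper's diverge and where yours is incomplete.
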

\begin{proof} Suppose $\gamma$ is the least recursive ordinal that does not have a unique notation $|\gamma| \in \mathcal{O}^{\mathbb N}$. By definition, a notation for $\gamma$ is a uniformly computably enumerable well-ordering which is represented by the path $\mathcal P \subseteq \mathcal{O}^{\mathbb N}$ of length $\gamma$. Throughout, we use $\gamma$ as an arbitrary limit ordinal defined as $\lim_{n \rightarrow \infty} \alpha_n$.
Suppose we are given some path $\mathcal P_{\gamma} \coloneqq \left\{\hbar  \in 2^{\omega} \ \colon \ \forall e \forall i<_e 3^e, \ L(\hbar\restriction 3^e, i)\right\}$ which represents $|\gamma| \in \mathcal{O}^{\mathbb N}$ as a computable well-ordering of $2^{<\omega} \times \omega$. Assume $R_e(x,y)$ defines a well-ordering with order-type $\gamma$, the number $3^e \in \mathcal W$ denotes the index for $\gamma$, and $\phi_e(x,y)\downarrow = R_e(x,y)$. Essentially, our argument adapts the proof of Corollary 5.5 in [24, pg. 20].
\begin{lem} (Schwichtenberg [23]) Let $<_e$ be a well-founded partial ordering of some set $Dom(R_e) \subseteq \mathbb N$, and $Q = 2^{<\omega} \times \omega$ a binary relation. Assume there is a partial function $\phi$ such that, for every $n = x \in Dom(R_e)$ and $e \in \omega$:
$$\forall y <_e n, \ Q(y, \varphi_{e}(y)) \implies Q(n, \phi(e,n)).$$
Then there exists a partial function $\varphi$ such that $\forall n \in Dom(R_e)$,\ $Q(n, \varphi(n))$. 
Moreover, if $\phi \in TOT$, then $\varphi \in TOT$.
\end{lem}
On the hypothesis that $\varphi \in TOT$ for each $y <_e n$ belonging to the field of $<_e$, fix $n = x$ for all $x \in Dom(<_e)$. Now let $\phi_e(x,y)\downarrow = \phi(e,n)$. If $y <_e n$ implies that $\psi_e(n) \prec \psi_e(n)+1$, then by well-founded induction on $<_e$ for all $y <_e n$, $$Q(y, \varphi_{e}(y)) \implies Q(n, \phi_e(x,y)).$$
Let $\mathcal{W}$ denote the set of indices of recursive ordinals and let WF$(R_e)$ denote $\left\{ e \in \omega : R_e(x,y) \ \text{is well-founded} \right\}$ when $R_e(x,y)$ defines a recursive well-ordering with order-type $\gamma$. Without loss in generality, we assume that $\mathcal{W} \leq_m$ WF$(R_e)$ since WF$(R_e)$ is $\Pi^{1}_1$-complete. To derive our contradiction, we now rely on the notion of the height of a computably enumerable, well-founded relation [24, pg. 16]. Assume the relation $L$ defines a well-ordering of $\omega$ and the height $[R_e]$ of $R_e(x,y)$ is equal to $\gamma$. Suppose further that $[L] \leq [R_e]$ so that we may consider $L$ as a uniformly computably enumerable ordering based on our well-foundedness assumptions. Let $[L] \neq [Q]$ and have that $[R_e] \leq [3^e]$. By the fact that every well-ordering $\leq [3^e]$ is recursive, then we may claim that $[L]$ is computable in a uniform manner, since the ordering induced by $L$ is computable in $\mathcal W$ by definition. Now, let $i \in \hbar \restriction 3^e$ if, for all $e \in \omega$ such that $[L] \leq [R_e]$, we can decide that $\hbar\restriction 3^e \in \mathcal P_{\gamma}$ on the hypothesis that $L$ is uniformly computable in each subset $X \subseteq \mathcal W$.

\begin{cor} (Sacks [24, pg. 20]) WF$(R_e) \notin \Sigma^{1}_1$.
\end{cor}
Let $\gamma(x)$ be an ordinal variable and let $\varphi \in TOT$ be a computable function which many-one reduces $X \subseteq \mathcal{W}$ to WF$(R_e)$. If $\gamma$ is infinite and $3^e \in X$, then we have the relation $\forall \gamma\exists y, R(3^e,y,\gamma(x))$ holds with $R$ computable and $X$ the projection of $R$. Suppose $\varphi(e) = \varphi_e(y)\downarrow$ if and only if $\forall \gamma\exists y, R(3^e,i,y,\gamma(x))$, and let $\varphi_e(y) \in TOT$ for all $y <_e x = n$. Now, we can deduce from the assumption that $\gamma$ lacks a unique notation for all $e \in \mathcal W$ the fact that $i \in \hbar\restriction 3^e$ if $\varphi_e(y)\downarrow$ and $\forall \gamma\forall y, R(3^e,i,y,\gamma(n))$ holds for some $e \in \omega$ such that $[R_e] \leq [3^e]$. That this implies our claim follows from the condition that $L$ is computable in $\mathcal W$, so $\hbar\restriction 3^e \in \mathcal P_{\gamma}$ if $3^e \in X$ and hence $[L] \leq [3^e]$ by transitivity of $\leq$. Therefore, $\varphi_e(y)\downarrow$ if $\varphi$ reduces $\forall \gamma\exists y, R(3^e,y,\gamma(n))$ to WF$(R_e)$ for some $e \in \mathcal W$ such that $[L] \leq [3^e]$. This demonstrates that $\varphi(e) \in WF(R_e)$ if and only if WF$(R_e) \in \Sigma^{1}_1$, on the assumption that $\gamma$ is infinite and is not uniquely associated to some notation $|\gamma| \in \mathcal{O}^{\mathbb N}$ represented by $\mathcal P_{\gamma}$.
\end{proof}

\section{Hierarchical Classification of the Computable Functions on Paths Through $\mathcal{O}^{\mathbb N}$}

As shown, to make the claim that one is able to construct a natural system of notations for
recursive ordinals is merely to exploit the intuitive belief that, relative to their complexity, certain decision procedures are of a
higher difficulty than others. That is, our belief in the existence of any “natural” hierarchy of computable
functions should be grounded in the ability to precisely distinguish, at any fixed level of the hierarchy, the
ordinal complexity of any two distinct, arbitrarily given decision procedures with respect to some
verifiably computable binary relation $L \in \Pi^{0}_1$. By using canonical notations in $\mathcal{O}^{\mathbb N}$ for the recursive ordinals, we are able to preserve the intuitiveness of the idea that any hierarchy of computable functions is linear and everywhere defined if our indexing of the hierarchy define ``natural'' well-orderings of $\omega$. Owing to this outline, one is able to claim that the notations in $\mathcal{O}^{\mathbb N}$ are precisely ``natural'' in the sense that they provide an optimal measure of the ordinal complexity of the computable functions with respect to verifying the totality of $R_e(x, y)$ as an arbitrary well-ordering of $\omega$.

In particular, we are now in the position to analyze the properties that a path $\mathcal P$ through $\mathcal{O}^{\mathbb N}$ should possess. We follow the outline [22, pg. 7]:

\begin{enumerate}
\item $\mathcal P$ is linearly ordered and closed under predecessors with respect to $<_{\mathcal{O}^{\mathbb N}}$.
\item If $|\beta| \in \mathcal P$ and $|\alpha| <_{\mathcal{O}^{\mathbb N}} |\beta|$, then $|\alpha| \in \mathcal P$.
\item Every recursive limit ordinal $\gamma < \omega^{CK}_1$ uniquely corresponds to some notation $|\gamma| \in \mathcal P$ and no other.
\end{enumerate}

We may now attempt analyze the structural properties that a canonical classification of the computable functions should possess if indexed by some $\mathcal P \subseteq \mathcal{O}^{\mathbb N}$. Let $\mathcal E^{\infty}$ denote the class of general recursive functions and let $(\phi_{|\alpha|})_{|\alpha| \in \mathcal P}$ denote our intended hierarchy of computable functions where $\phi \in \mathcal E^{\infty}$, then
\begin{enumerate}[(I)]
\item For every $|\alpha|,|\beta| \in \mathcal{O}^{\mathbb N}$, we have that $|\alpha| \neq |\beta| \implies \phi_{|\alpha|} \neq \phi_{|\beta|}.$
\item For any $|\alpha|\in \mathcal P$, $\phi_{|\alpha|} \in TOT$.
\item For all $\phi \in TOT$, there is some unique $|\alpha| \in \mathcal P$ such that $\phi_{|\alpha|} = (\phi_{|\alpha|})_{|\alpha| \in \mathcal P}$.
\end{enumerate}

With these details in place, we expect that the statement $Q(|\alpha|,x,y)$ which expresses the property that the $|\alpha|^{th}$ function with input $x$ taking the value $y$ should be inductively definable over some computable binary relation $L \in \Pi^{0}_1$ $$\forall e \forall y<_ex ,\ L(\hbar\restriction e, y)$$ with $|\alpha|$ a unique notation. Naturally, this follows from Defintion 4.1 of $\mathcal{O}^{\mathbb N}$. Moreover, this $\Pi^{0}_1$-definable statement $Q$ should ``intensionally'' reflect the $\Pi^{1}_1$-statement $$\forall \alpha\exists y, R(e,x,y,\alpha)$$ when $e \in \mathcal W$, and $\alpha$ denotes an ordinal variable which is uniquely associated to the notation $|\alpha| \in \mathcal P$ by condition (3). 
Assume that there is a computably enumerable relation $<^{\prime}_{\mathcal{O}^{\mathbb N}}$ which extends $<_{\mathcal{O}^{\mathbb N}}$ such that, for any $|\beta| \in \mathcal{O}^{\mathbb N}$, we have $$|\alpha| <_{\mathcal{O}^{\mathbb N}} |\beta| \iff |\alpha| <^{\prime}_{\mathcal{O}^{\mathbb N}} |\beta|.$$ Now, we inductively define $<^{\prime}_{\mathcal{O}^{\mathbb N}}$ as follows:
\begin{enumerate}[(a)]
\item If there exists some $|\alpha|$ such that $|\alpha| \neq 0$, then $|\alpha| <^{\prime}_{\mathcal{O}^{\mathbb N}} |\alpha|+1$.
\item If there exists some $|\alpha|,x,y$ such that $Q(|\alpha|,x,y)$ and $\phi_{|\alpha|}(x)=y$, then we have that $|\alpha| <^{\prime}_{\mathcal{O}^{\mathbb N}} 3^{|\alpha|}$ where $3^{|\alpha|}$ is a unique limit notation.
\item $<^{\prime}_{\mathcal{O}^{\mathbb N}}$ is transitive.
\end{enumerate}
Thus we see that $Q$ is $\Sigma^{0}_1$-definable and monotone, so $<^{\prime}_{\mathcal{O}^{\mathbb N}}$ is inductively definable over $Q$ for some $|\alpha| \in \mathcal P$ and $\phi \in TOT$. Owing to the fact that $L$ is computable in $\mathcal O$, then \textit{a fortiori}, it should not be the case that there exists a $\Sigma^{1}_1$-definition of $\mathcal P$ with respect to $<^{\prime}_{\mathcal{O}^{\mathbb N}}$ being inductively definable over $Q$, which is actually the case if one assumes that $\alpha$ is infinite by Theorem 4.2:

\begin{cor} There is no $\Sigma^{1}_1$-definable computable predicate $Q$ which extends $<_{\mathcal{O}^{\mathbb N}} \ \subseteq 2^{<\omega} \times \omega$ in the sense that, if $\alpha,\beta$ are infinite ordinals, then $$|\alpha| \in \mathcal P \iff \exists \phi \in TOT \ \forall |\beta| \in \mathcal{O}^{\mathbb N} $$ such that $$\forall x > y \in Dom(R_e), \ Q(|\beta|,x, \phi(x)) \implies |\alpha| <^{\prime}_{\mathcal{O}^{\mathbb N}} |\beta|.$$
\end{cor}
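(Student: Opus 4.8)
The plan is to argue by reductio, playing the assumed predicate $Q$ off against Theorem 4.2 and the Sacks corollary that $\mathrm{WF}(R_e) \notin \Sigma^1_1$. So suppose $Q$ were a computable predicate, $\Sigma^1_1$-definable in the sense of the statement, extending $<_{\mathcal{O}^{\mathbb N}}$ through the displayed biconditional. The argument then splits into two movements. First, show that under this hypothesis the membership relation ``$|\alpha| \in \mathcal P$'' is itself $\Sigma^1_1$ for all infinite $\alpha$. Second, exhibit a many-one reduction of $\mathrm{WF}(R_e)$ to that relation, so that $\mathrm{WF}(R_e) \in \Sigma^1_1$, contradicting Sacks; and since $\mathrm{WF}(R_e)$ was moreover seen to be $\Pi^1_1$-complete, the contradiction is genuine.

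For the first movement I would use the inductive clauses (a)--(c) that define $<^{\prime}_{\mathcal{O}^{\mathbb N}}$ over $Q$: since $Q$ is $\Sigma^0_1$-definable and monotone, $<^{\prime}_{\mathcal{O}^{\mathbb N}}$ is the least fixed point of a positive arithmetic operator relative to $Q$, hence $\Pi^1_1$-in-$Q$ at worst; but the biconditional $|\alpha| <_{\mathcal{O}^{\mathbb N}} |\beta| \iff |\alpha| <^{\prime}_{\mathcal{O}^{\mathbb N}} |\beta|$ together with the computability of $Q$ collapses this back to a condition on the path $\mathcal P$ that is arithmetic in $\mathcal W$ (indeed $\Sigma^0_1$-in-$\mathcal W$, by the presentation of $\mathcal{O}^{\mathbb N}$ as a $\Pi^0_1(\mathcal W)$-class). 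Feeding this into the right-hand side $\exists \phi \in TOT \ \forall |\beta| \in \mathcal{O}^{\mathbb N} [\, \forall x > y \in Dom(R_e), \ Q(|\beta|,x,\phi(x)) \implies |\alpha| <^{\prime}_{\mathcal{O}^{\mathbb N}} |\beta| \,]$, the leading $\exists \phi \in TOT$ is absorbed into a $\Sigma^1_1$ matrix and the whole display is $\Sigma^1_1$; so ``$|\alpha| \in \mathcal P$'' is $\Sigma^1_1$. For the second movement: by Theorem 4.2 every recursive ordinal has a notation in $\mathcal{O}^{\mathbb N}$, and by Definition 4.1 a recursive limit ordinal of index $3^e \in \mathcal W$ receives the unique limit notation $3^e$ along the path $\mathcal P$ --- distinct limit ordinals getting distinct notations by condition (3) --- while the construction of $\mathcal P_\gamma$ closes off at a limit notation only if $\hbar \restriction 3^e$ satisfies $L(\hbar \restriction 3^e, i)$ cofinally, which fails when $R_e$ is ill-founded. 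Hence $e \longmapsto 3^e$ is uniformly computable in $\mathcal W$ and satisfies $e \in \mathrm{WF}(R_e) \iff 3^e \in \mathcal P$; composing with the $\Sigma^1_1$ definition of $\mathcal P$-membership yields $\mathrm{WF}(R_e) \in \Sigma^1_1$, the desired contradiction. Therefore no such $Q$ exists.

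The main obstacle I expect is the complexity bookkeeping in the first movement --- specifically, verifying that adjoining the inductively defined $<^{\prime}_{\mathcal{O}^{\mathbb N}}$ to the matrix does not push the right-hand side out of $\Sigma^1_1$. This hinges on two delicate points: that the monotonicity and $\Sigma^0_1$-ness of $Q$ really do force $<^{\prime}_{\mathcal{O}^{\mathbb N}}$, under the biconditional with $<_{\mathcal{O}^{\mathbb N}}$ and the computability of $Q$, down to something arithmetic in $\mathcal W$ rather than a genuinely $\Pi^1_1$ inductive object; and that the quantifier $\exists \phi \in TOT$ genuinely ranges over total computable witnesses, so that it is harmless to subsume it inside $\Sigma^1_1$ rather than treating it as a separate set quantifier. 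A secondary subtlety, inherited from the proof of Theorem 4.2, is checking that the path-construction of Definition 4.1 provably fails to close off at a limit notation exactly when $R_e$ admits an infinite descending sequence --- that is, that the reduction $e \longmapsto 3^e$ is faithful on the ill-founded side.
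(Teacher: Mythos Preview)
Your proposal is in line with the paper's approach. The paper does not supply a separate proof for this corollary at all: it is stated immediately after the sentence ``it should not be the case that there exists a $\Sigma^{1}_1$-definition of $\mathcal P$ with respect to $<^{\prime}_{\mathcal{O}^{\mathbb N}}$ being inductively definable over $Q$, which is actually the case if one assumes that $\alpha$ is infinite by Theorem 4.2,'' and is meant to follow from that remark together with the proof of Theorem~4.2 (which in turn invoked Sacks' $\mathrm{WF}(R_e)\notin\Sigma^1_1$). Your two movements --- extracting a $\Sigma^1_1$ definition of $\mathcal P$-membership from the hypothetical $Q$, then reducing $\mathrm{WF}(R_e)$ to it via $e\mapsto 3^e$ --- are exactly a fleshed-out version of that implicit argument, and your flagged obstacles (the complexity bookkeeping for $<^{\prime}_{\mathcal{O}^{\mathbb N}}$ and the faithfulness of the reduction on the ill-founded side) are precisely the points the paper leaves to the reader.
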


Following [20, pg. 5], suppose that $\mathcal E(L)$ denotes the class of unnested $L$-computable functions with respect to the fact that $L$ is computable in $\mathcal O$ where $\mathcal O \in \Pi^{1}_{1}$. Let $[L]$ represent the height of $L$ as in Theorem 4.2, which by definition is identified as a well-ordering of $\omega$. Thus, if we claim that this well-ordering induced by $[L]$ is uniquely associated with the computable well-ordering defined by some total, well-founded relation $R_e$, then we have that $[L] \leq [R_e]$ uniformly in $e \in \omega$. 

Let $L$ be as above, and suppose that $L^{\prime}$ represents the extension of $L$ with respect to $<_{\mathcal{O}^{\mathbb N}}$. Now, if $L, L^{\prime}$ range over ``natural'' orderings of $\omega$ induced by $<_{\mathcal{O}^{\mathbb N}}$, then
\begin{enumerate}[(A)]
\item If $[L] \leq [L^{\prime}]$, then $\mathcal E(L) \subseteq \mathcal E(L^{\prime})$.
\item If $[L^{\prime}] > [L]$, then $\mathcal E(L) \neq \mathcal E(L^{\prime})$.
\item $\mathcal E^{\infty} = \bigcup \mathcal E(L)$.
\end{enumerate}

Consequently, we observe that our intended classification of $\mathcal E^{\infty}$ on paths through $\mathcal{O}^{\mathbb N}$ is indeed a ``natural'' classification of $\mathcal E^{\infty}$ by generating recursions over the orderings induced by $<_{\mathcal{O}^{\mathbb N}}$ to succesively define larger classes of unnested $L$-computable functions of increasing complexity.

However, because one has that the relation $<_{\mathcal{O}^{\mathbb N}}$ is arithmetical, one may suspect that there exists a certain ``absolute'' classification of $\mathcal E^{\infty}$ on paths through $\mathcal{O}^{\mathbb N}$ with respect to the property that $\mathcal E^{\infty}$ is closed under relative computability. To see this, we may suppose that $Dom(<_{\mathcal{O}^{\mathbb N}})$ belongs to the class of sets which are inductively definable from a computable binary relation $L$ and comptuable in $\Pi^{0}_{n}$ for any $n \geq 1$. By reflecting on the definition of $<_{\mathcal{O}^{\mathbb N}}$ and taking the convention that any successive $n$-fold extension of the relation $<_{\mathcal{O}^{\mathbb N}}$ is $\Delta^{0}_{n+1}$-complete with respect to the binary relation $L$ being Turing-reducible in Kleene's $\mathcal O$, one can sketch how extensions of $<_{\mathcal{O}^{\mathbb N}}$ corresponds with arithmetical definability at some stage. In particular, we may aim to associate any $\Delta^{0}_{n+1}$-definable extension of $<_{\mathcal{O}^{\mathbb N}}$ in a manner that reflects a successive $n$-fold jump iteration $\emptyset^{(n)}$ of the computable sets $\emptyset^{(0)}$, which we define inductively as follows: $$\emptyset^{(0)} \coloneqq \emptyset, \ \emptyset^{(n+1)} \coloneqq (\emptyset^{(n)})^{\prime}$$ Naturally, any $n^{th}$-iteration of the jump is $\Sigma^{0}_n$-definable by a direct induction on $n \in \omega$. Therefore, because we have made the convention to have $<_{\mathcal{O}^{\mathbb N}} \ \in \Delta^{0}_{n}$, then any extension $<^{\prime}_{\mathcal{O}^{\mathbb N}}$ is $\Delta^{0}_{n+1}$-complete if and only if it is computable in $\Sigma^{0}_n$.

Therefore, it appears worthwhile to investigate the various arithmetically definable $\textit{closure}$ properties that such an ``absolute'' classification of $\mathcal E^{\infty}$ would possess with respect to iterating the jump operator through the effective transfinite for each unique notation. Moreover, because it follows from clauses (A-C) that there exists a ``natural'' classification of the computable functions indexed by unique notations in $\mathcal{O}^{\mathbb N}$, then one may suspect that there exists a level of collapse which is obtained by defining a transfinite iteration of the jump operator along paths through $\mathcal{O}^{\mathbb N}$.

In a sense, the existence of a collapsing level in the classification of the computable functions on paths through $\mathcal{O}^{\mathbb N}$ would mirror the fact that in Kleene's $\mathcal O$, there exists a $\Pi^{1}_1$-definable subset $X$ of $\mathcal O$ which is linearly ordered by $<_{\mathcal{O}}$ and of order-type $\omega^{CK}_1$ [23, pg. 10]. 

However, as is known, the system $\mathcal O$ is infinitely branching at recursive limits $\geq \omega$, and so such a classification with respect to $X$ fails in any absolute manner. Now if we wish to iterate the jump operator into the effective transfinite for any $\alpha < \omega^{CK}_1$, our hope is to rely on the fact that every recursive ordinal possesses a unique notation in $\mathcal O^{\mathbb N}$ with respect to the relation $<_{\mathcal{O}^{\mathbb N}}$ being inductively definable over a computable binary relation $L$, such that $L$ is reducible in $\emptyset^{(\alpha)}$.

We now sketch our means of defining this transfinite iteration of the jump operator along paths through $\mathcal{O}^{\mathbb N}$. Suppose $\lambda < \omega^{CK}_1$ is a recursive limit ordinal which possess a canonical notation $|\lambda| \in \mathcal{O}^{\mathbb N}$, and this notation is represented by a binary relation $L$ on a path $\mathcal P \subseteq \mathcal{O}^{\mathbb N}$ which is computable in some $X \in 2^{\omega}$ such that $<_{\mathcal{O}^{\mathbb N}}$ is inductively definable over $L$. Then we define $\emptyset^{(\lambda)}$ as the $\lambda$th jump of $X$ by iterating the Turing jump at successor stages $\alpha +1$ and taking an effective limit at stage $\lambda$ if there is some $\phi \in TOT$ such that $\phi = \phi_{|\lambda|}$ for some unique $|\lambda| \in \mathcal P$. Note that this would follow from (III) if we have that $\phi$ is an unnested $L$-computable function. Furthermore, since we may appeal to the fact that the jump operator is degree invariant with respect to $L$ being reducible in $\emptyset^{(\lambda)}$, then it is clear that the naturalness of iterating the jump operator along paths through $\mathcal{O}^{\mathbb N}$ is also preserved at limit stages in the order-theoretic sense, and therefore one can classify $\mathcal E^{\infty}$ on paths through $\mathcal{O}^{\mathbb N}$ independently of the ``naturalness'' of the recursions defined over the orderings induced $<_{\mathcal{O}^{\mathbb N}}$ of length $\geq \omega$. \\

Now, in order to provide a more formal setting for discussing the meaning of a collapsing result for an absolute classification of $\mathcal E^{\infty}$, we provide the following definition:
\begin{defn} Let $\mathcal P \subseteq \mathcal{O}^{\mathbb N}$ be a path through $\mathcal{O}^{\mathbb N}$ which is closed under initial substring. We say $\mathcal P$ is maximal with respect to $<_{\mathcal{O}^{\mathbb N}}$ if every initial segment of $\mathcal{O}^{\mathbb N}$ is contained in $\mathcal P$ under the ordering of $<_{\mathcal{O}^{\mathbb N}}$ with respect to predecessors.
\end{defn}

By proceeding inductively through the class of constructive ordinals, one might conjecture that this collapsing result would occur at the degree $\emptyset^{(\omega^{CK}_1)}$ of Kleene's $\mathcal O$, in the sense that there exists a maximal path $\mathcal P \subseteq \mathcal{O}^{\mathbb N}$ such that, for any $\phi \in TOT$, we have that $\phi = (\phi_{|\lambda|})_{|\lambda| \in \mathcal P}$ for some limit $\lambda$, and the binary computable relation $L$ on $\mathcal P$ is reducible to $\emptyset^{(\omega^{CK}_1)}$ when $\mathcal{O}^{\mathbb N}$ acts as a $\Pi^{0}_{1}(\mathcal O)$-class.
\\

\subsection*{Concluding Remarks}

I would like to thank Professor Solomon Feferman for reviewing an earlier draft of this manuscript and providing encouragement in the process of its completion. Additionally, I would like to thank Professor Jeffry Hirst for looking over the typesetting and exposition.

\end{document}